\documentclass[11pt]{amsart}

\usepackage[utf8]{inputenc}
\usepackage[T1]{fontenc}
\usepackage{amsmath, amssymb, amsthm, mathtools}
\usepackage{geometry}
\usepackage[pagebackref]{hyperref}
\usepackage{xcolor}
\usepackage{tikz-cd}
\usepackage{comment}
\usepackage{todonotes}
\usepackage[all]{xy}
\usepackage{enumerate}

\newcommand{\BC}{\mathbb{BC}}       

\newcommand{\C}{\mathbb{C}}          
\newcommand{\R}{\mathbb{R}}
\newcommand{\Z}{\mathbb{Z}}
\newcommand{\D}{\mathbb{D}}
\renewcommand{\P}{\mathbb{P}}
\newcommand{\B}{\operatorname{B}}

\newcommand{\e}{\mathbf{e}}          
\newcommand{\ed}{\mathbf{e}^\dagger} 
\newcommand{\wb}{b}

\newcommand{\iunit}{\mathbf{i}}      
\newcommand{\junit}{\mathbf{j}}      
\newcommand{\kunit}{\mathbf{k}}      

\newcommand{\GL}{\operatorname{GL}}  
\newcommand{\U}{\operatorname{U}}    
\newcommand{\Gr}{\operatorname{Gr}}
\newcommand{\Bun}[1][n]{\operatorname{Bun}_#1^\BC(B)}
\newcommand{\Buninf}{\operatorname{Bun}^\BC(B)}

\newcommand{\ev}{\operatorname{ev}}  
\newcommand{\Tor}{\operatorname{Tor}}
\newcommand{\BCP}{\mathbb{BCP}} 
\newcommand{\CP}{\mathbb{CP}} 

\newtheorem{prop}{Proposition}[section]   
\newtheorem{thm}[prop]{Theorem}           
\newtheorem{lem}[prop]{Lemma}             
\newtheorem{cor}[prop]{Corollary}         
\theoremstyle{definition}
\newtheorem{defn}[prop]{Definition}       
\newtheorem{remark}[prop]{Remark}            
\newtheorem{example}[prop]{Example}       

\title{Bicomplex Characteristic Classes}

\author[Arciniega-Nevárez]{José Antonio Arciniega-Nevárez}
\address{División de Ingenierías, Campus Guanajuato, Universidad de Guanajuato. Av. Juárez No. 77, Zona Centro, C.P. 36000, Guanajuato, Gto., México.}
\email{ja.arciniega@ugto.mx}
\author[Bravo-Ortega]{Yesenia Bravo-Ortega}
\address{Facultad de Ciencias, Universidad de Colima,
Bernal Díaz del Castillo No.~340, Colonia Villas San Sebastián,
C.P.~28045, Colima, Colima, Mexico.}
\email{ybravoo.math@gmail.com}
\author[Rabelo]{Inácio Rabelo}
\address{Departamento de Matemática, Universidade Federal de São Carlos, Caixa Postal 676, 13560-905, São Carlos, SP, Brazil.}
\email{rabeloinacio@alumni.usp.br \\ inaciorabelo@ufscar.br}
\author[Romano-Velázquez]{Agustín Romano-Velázquez}
\address{Instituto de Matem\'aticas, Unidad Cuernavaca, Universidad Nacional Aut\'onoma
de M\'exico, Avenida Universidad s/n, Colonia Lomas de Chamilpa, Cuernavaca,
Morelos, México.}
\email{agustin.romano@im.unam.mx}
\date{September 2026}
\subjclass[2020]{57R20, 55R40, 53C15, 30G35}

\begin{document}

\begin{abstract}
    We introduce bicomplex fiber bundles and develop a framework for bicomplex characteristic classes and almost bicomplex structures on smooth manifolds. In the first part, we present the basic definitions and properties of these bundles, including their fundamental idempotent decomposition and the correspondence between their isomorphism classes and homotopy classes of maps into the bicomplex infinite Grassmannian. We then define bicomplex characteristic classes and show that, in general, they are not determined by the Chern classes of the underlying complex bundle. We also construct the bicomplex Chern character and discuss its algebraic properties. Finally, we apply the resulting theory to almost bicomplex manifolds, providing examples and obstructions to the existence of such structures.
\end{abstract}

\maketitle

\section{Introduction}

The ring of bicomplex numbers $\BC$ is a commutative, unital $\R$-algebra of dimension $4$ that contains nontrivial zero divisors. Its structure is governed by two idempotent elements in $\BC$, denoted by $\e$ and $\ed$, which induce
a canonical splitting
\begin{equation*}
\BC\cong\C\mathbf{e}\oplus\C\ed.
\end{equation*}
This decomposition permeates some algebraic and topological
constructions, allowing bicomplex objects to be studied in terms of
pairs of complex objects. Despite the elegance of this description, a
systematic study of characteristic classes for bicomplex fiber bundles
has remained absent from the literature.

In this article, we develop a comprehensive theory of bicomplex
characteristic classes. We first show that a bicomplex fiber bundle
$E\to B$ of rank $n$ decomposes uniquely, up to isomorphism, as
\begin{equation*}
E\cong E^{1}\mathbf{e}\oplus E^{2}\ed,
\end{equation*}
where $E^{1}$ and $E^{2}$ are complex vector bundles of rank $n$.
We prove a complete classification theorem: isomorphism classes of
rank $m$ bicomplex bundles over a paracompact space $B$ are in
bijection with homotopy classes of maps into the bicomplex
Grassmannian,
\begin{equation*}
[B,\operatorname{Gr}_{m}(\BC^{\infty})]
\longleftrightarrow
\operatorname{Bun}_{m}^{\BC}(B),
\end{equation*}
where
\begin{equation*}
\operatorname{Gr}_{m}(\BC^{\infty})
\cong
\operatorname{Gr}_{m}(\C^{\infty})\times
\operatorname{Gr}_{m}(\C^{\infty}).
\end{equation*}

A key geometric insight is that the bicomplex projective bundle
$\mathbb{P}E\to B$ has fibers isomorphic to
\begin{equation*}
\mathbb{BCP}^{n-1}\cong\mathbb{CP}^{n-1}\times\mathbb{CP}^{n-1}.
\end{equation*}
This is fundamentally different from the complex projective bundle $\mathbb{P}E_\C\to B$ of the underlying complex bundle $E_\C$ (which has fibers $\mathbb{CP}^{2n-1}$) of $E$. This discrepancy is the first
indication that the bicomplex characteristic classes defined here are of interest.

Via a bicomplex version of the Leray--Hirsch theorem applied to
$\mathbb{P}E$, we introduce the bicomplex Chern classes
\begin{equation*}
b_{k}(E)\in H^{2k}(B;\Z),\qquad k=0,\ldots,n,
\end{equation*}
with $b_{0}(E)=2$ and total class
\begin{equation*}
b(E)=b_0+b_{1}(E)+\cdots+b_{n}(E)\in H^{*}(B;\Z).
\end{equation*}
The bicomplex Chern classes are not determined by the Chern classes of the underlying complex bundle $E_{\mathbb C}=E_1\oplus E_2$. The normalization $b_{0}=2$ is forced by the idempotent decomposition:
it is the image of the bicomplex unit $1=\mathbf{e}+\ed$ under the augmentation to ordinary cohomology, reflecting that a
bicomplex bundle of rank $n$ carries two complex components each of rank $n$. We prove that these classes satisfy naturality and normalization,
but not the Whitney sum formula. A splitting principle also reduces their computation to sums of bicomplex line bundles.

We further construct the bicomplex Chern character
\begin{equation*}
\operatorname{ch}_{\BC}:
K_{\BC}(B)\longrightarrow H^{\mathrm{ev}}(B;\mathbb{Q}),
\end{equation*}
showing that it is a group homomorphism. We carefully analyze the
structure of the bicomplex $K$-theory of connected base space $B$, noting that
\begin{equation*}
K_{\BC}(B)\cong K(B)\times_{\Z}K(B),
\end{equation*}
the fiber product over the rank map, not the tensor product $K(B)\otimes K(B)$. Consequently, $\operatorname{ch}_{\BC}$ is not a rational isomorphism.

Finally, we apply our theory to the study of almost bicomplex manifolds. In particular, we
prove that the complex projective plane $\mathbb{CP}^{2}$ admits no almost bicomplex structure, a result which, to our knowledge, is new in the literature. 

This paper is organized as follows. Section~2 recalls the algebraic
structure of bicomplex modules and the basic linear algebra.
Section~3 introduces bicomplex fiber bundles, their operations, and
their homotopy classification. In Section~4, we develop the theory of
bicomplex Chern classes, proving their fundamental properties, the
splitting principle, and their independence from complex Chern classes of $E_\C$.
Section~5 is devoted to the universal Chern classes and the cohomology
of the classifying space. In Section~6, we construct the bicomplex
Chern character and discuss its algebraic properties. Finally,
Section~7 applies our results to almost bicomplex manifolds and
provides explicit examples of the new obstructions.

\section{Bicomplex modules}\label{sec:bicomplex-modules}
In this section, we shall give an overview of the algebraic and linear structures that form the foundation for the rest of the paper.
\subsection{The ring of bicomplex numbers}

The ring of bicomplex numbers is defined as
\begin{equation*}
    \BC = \{ Z = z + w\,\junit \mid z,w \in \C(\iunit),\ \junit^2 = -1 \},
\end{equation*}
where $\C(\iunit) = \{ a + b\,\iunit \mid a,b \in \R,\ \iunit^2 = -1 \}$ denotes the usual complex numbers. Addition and multiplication are performed componentwise, with $\kunit = \iunit\,\junit = \junit\,\iunit$. Since both imaginary units commute, one readily checks that $\kunit^2 = 1$, so $\BC$ contains nontrivial zero divisors. The ring $\BC$ can be regarded as a real algebra of dimension $4$, or as a complex algebra of dimension $2$ over $\C(\iunit)$. A fundamental feature of $\BC$ is the existence of two distinguished idempotent elements.

\begin{prop}
    The elements
    \begin{equation*}
        \e = \frac{1 + \iunit\,\junit}{2} \quad \text{and} \quad
        \ed = \frac{1 - \iunit\,\junit}{2}
    \end{equation*}
    satisfy the following properties:
    \begin{enumerate}
        \item $\e\,\ed = \ed\,\e = 0$ (they are orthogonal zero divisors);
        \item $\e^2 = \e$ and $(\ed)^2 = \ed$ (idempotency);
        \item $\e + \ed = 1$ and $\e - \ed = \iunit\,\junit$.
    \end{enumerate}
\end{prop}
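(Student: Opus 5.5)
The plan is a direct computation in $\BC$, using only that $\BC$ is commutative and that the one relation needed is $(\iunit\junit)^2 = \kunit^2 = 1$. This follows from $\iunit^2=\junit^2=-1$ and the commutation $\iunit\junit=\junit\iunit$:
\begin{equation*}
(\iunit\junit)^2=\iunit^2\junit^2=(-1)(-1)=1.
\end{equation*}
Everything else is polynomial arithmetic in the single element $\kunit=\iunit\junit$ with $\kunit^2=1$. The factor $\tfrac12$ is harmless because $\R\subset\BC$ is central.

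I would prove item (3) first, since it is immediate from the definitions:
\begin{equation*}
\e+\ed=\tfrac{(1+\kunit)+(1-\kunit)}{2}=1,\qquad \e-\ed=\tfrac{2\kunit}{2}=\kunit=\iunit\junit.
\end{equation*}

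For item (1), commutativity gives $\e\,\ed=\ed\,\e$, so it suffices to compute one product. This is a difference of squares:
\begin{equation*}
\e\,\ed=\tfrac14(1+\kunit)(1-\kunit)=\tfrac14(1-\kunit^2)=0.
\end{equation*}
Since $\e\neq0$ and $\ed\neq0$ (for instance, their real parts equal $\tfrac12$), they are genuine zero divisors.

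For item (2), expand $\e^2=\tfrac14(1+2\kunit+\kunit^2)=\tfrac14(2+2\kunit)=\e$, and similarly $(\ed)^2=\tfrac14(1-2\kunit+\kunit^2)=\ed$. Alternatively, multiply the identity $\e+\ed=1$ from (3) by $\e$ and use (1) to get $\e=\e^2+\e\,\ed=\e^2$, and the same for $\ed$.

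There is no real obstacle here. The only point to state explicitly is the sign computation $\kunit^2=+1$, which relies on the commutativity of $\iunit$ and $\junit$ built into the definition of $\BC$. I would spell that step out and present the remaining identities as one-line expansions.
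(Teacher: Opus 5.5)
Your proof is correct. It is the direct verification the paper intends: the paper states this proposition without proof, relying only on the fact $\kunit^2=1$ noted just before it. Reducing everything to arithmetic in $\kunit=\iunit\junit$ with $\kunit^2=1$ is sound, and so is your alternative derivation of idempotency from items (1) and (3).
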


A direct consequence is the \emph{idempotent decomposition} of any bicomplex number. If $Z = z + w\,\junit$, then
\begin{equation*}
    Z = Z\e + Z\ed = (z - w\,\iunit)\e + (z + w\,\iunit)\ed.
\end{equation*}
Geometrically, this exhibits $\BC$ as the direct sum of two copies of $\C$:
\begin{equation*}
    \BC \cong \C\,\e \oplus \C\,\ed.
\end{equation*}
This decomposition is one of the main tools in bicomplex analysis, and the theory discussed in the sequel is based on it. 

\subsection{The canonical module \texorpdfstring{$\BC^n$}{}}

Let $n \geq 1$. The canonical bicomplex module $\BC^n$ is defined as the set of $n$-tuples with entries in $\BC$:
\begin{equation*}
    \BC^n = \{ v = (Z_1, \dots, Z_n) \mid Z_k \in \BC \}.
\end{equation*}
Applying the idempotent decomposition componentwise, any vector $v \in \BC^n$ splits as
\begin{equation*}
    v = v^1\e + v^2\ed,
\end{equation*}
with $v^1, v^2 \in \C^n$. This gives the fundamental canonical isomorphism
\begin{equation*}
    \BC^n \cong \C^n\e \oplus \C^n\ed.
\end{equation*}
This decomposition is compatible with all linear and topological structures; in particular, $\BC^n$ inherits the product topology of $\R^{4n}$.

\subsection{Null cone and regular elements}\label{subsec:null-cone}

The zero divisors of $\BC$ are precisely the elements for which one of the complex components vanishes. This motivates the following definition.

\begin{defn}
    The \emph{null cone} of $\BC^n$ is the set
    \begin{equation*}
        \mathcal{N}_n = \{ v = v^1\e + v^2\ed \in \BC^n \mid v^1 = 0 \text{ or } v^2 = 0 \text{ in } \C^n \}.
    \end{equation*}
    A vector $v \notin \mathcal{N}_n$, i.e. with both $v^1$ and $v^2$ non-zero, is called a \emph{regular vector}.
\end{defn}

For $n=1$, the null cone is precisely the set of zero divisors $\C\e \cup \C\ed$. The group of units of $\BC$, denoted $\BC^*$, is therefore
\begin{equation*}
    \BC^* \cong \C^*\e \oplus \C^*\ed.
\end{equation*}
Indeed, the inverse of a unit $Z = z + w\,\junit$ is given by
\begin{equation*}
    Z^{-1} = \frac{\widetilde{Z}}{z^2 + w^2}
            = \frac{1}{z - w\,\iunit}\,\e + \frac{1}{z + w\,\iunit}\,\ed,
\end{equation*}
where $\widetilde{Z} = z - w\,\junit$ denotes the conjugate of $Z$ with respect to $\junit$.

\subsection{Inner product and norm}

The standard bicomplex inner product is defined as follows.

\begin{defn}\label{def:inner-product}
    For $u = u^1\e + u^2\ed$ and $v = v^1\e + v^2\ed$ in $\BC^n$, define
    \begin{equation*}
        \langle u, v \rangle_{\BC}
        = \langle u^1, v^1 \rangle_{\C} \e
          + \langle u^2, v^2 \rangle_{\C} \ed,
    \end{equation*}
    where $\langle \cdot, \cdot \rangle_{\C}$ denotes the standard complex Hermitian inner product.
\end{defn}

This inner product is positive definite in the sense that $\langle v, v \rangle_{\BC} = 0$ if and only if $v = 0$. Moreover, the \emph{induced bicomplex norm}, taking values in the hyperbolic numbers $\D = \{ a + b\,\kunit \mid a,b \in \R \}$, is given by
\begin{equation*}
    \| v \|_{\D} = \| v^1 \|_{\C}\,\e + \| v^2 \|_{\C}\,\ed. 
\end{equation*}
See \cite{Banerjee2019} for more details.
\subsection{Gram--Schmidt process}\label{subsec:gram-schmidt}

The idempotent decomposition makes the classical Gram--Schmidt process completely reducible. For regular vectors $u, v \in \BC^n$, the orthogonal projection of $u$ onto $v$ decomposes componentwise:
\begin{equation*}
    \frac{\langle u, v \rangle_{\BC}}{\|v\|^2_{\D}} \, v
    =
    \frac{\langle u^1, v^1 \rangle_{\C}}{\|v^1\|^2_{\C}} \, v^1\,\e
    +
    \frac{\langle u^2, v^2 \rangle_{\C}}{\|v^2\|^2_{\C}} \, v^2\,\ed.
\end{equation*}
Consequently, one can apply the complex Gram--Schmidt process independently to the $\e$-part and the $\ed$-part. This yields a bicomplex orthonormal basis for any bicomplex submodule spanned by regular vectors.

\subsection{Matrix groups and unitary group}

The idempotent decomposition extends naturally to matrices. For $M, N \in \mathrm{M}_n(\BC)$, write $M = M^1\e + M^2\ed$ and $N = N^1\e + N^2\ed$ with $M^1$, $M^2$, $N^1$, $N^2$ in $\mathrm{M}_n(\C)$. Their product satisfies
\begin{equation}\label{eq:matrix-multiplication}
    MN = (M^1 N^1)\e + (M^2 N^2)\ed.
\end{equation}

\begin{defn}
    The \emph{bicomplex general linear group} is
    \begin{equation*}
        \mathrm{GL}(n,\BC)
        = \{ A \in \mathrm{M}_n(\BC) \mid \det(A) \text{ is a unit} \}.
    \end{equation*}
\end{defn}

By Equation~\eqref{eq:matrix-multiplication}, a matrix is invertible if and only if both complex components $A^1$ and $A^2$ are invertible. Hence
\begin{equation*}
    \mathrm{GL}(n,\BC) \cong \mathrm{GL}(n,\C)\,\e \oplus \mathrm{GL}(n,\C)\,\ed.
\end{equation*}

Similarly, if $M = (Z_{ij})$, let $\overline{M} = (\overline{Z_{ij}})$ be the componentwise complex conjugation, and define $M^*$ as the conjugate transpose. The unitary group is
\begin{equation*}
    \mathrm{U}(n,\BC) = \{ U \in \mathrm{M}_n(\BC) \mid U U^* = I_n \}.
\end{equation*}
Again, from the idempotent decomposition we have
\begin{equation}
    \mathrm{U}(n,\BC) \cong \mathrm{U}(n,\C)\,\e \oplus \mathrm{U}(n,\C)\,\ed.
\end{equation}

Using the inner product in Definition~\ref{def:inner-product} and the matrix multiplication formula of Equation~\eqref{eq:matrix-multiplication}, it is straightforward to verify that unitary bicomplex matrices preserve the bicomplex inner product:
\begin{equation*}
    \langle U u, U v \rangle_{\BC} = \langle u, v \rangle_{\BC}, \qquad U \in \mathrm{U}(n,\BC),\ u,v \in \BC^n.
\end{equation*}

The bicomplex conjugation $Z\mapsto\widetilde{Z}=z-w\mathbf{j}$ introduced in \S\textup{\ref{subsec:null-cone}} interchanges the idempotents: $\widetilde{\e}=\ed$ and $\widetilde{\ed}=\e$.
Applied entrywise to matrices, it induces an involution that we use below to prove the invariance of the characteristic classes under the following  map
\begin{equation}\label{eq:involutionmat}
\begin{gathered}
\iota: \GL(n,\BC)\to\GL(n,\BC),\\
A^1\e+A^2\ed \mapsto A^2\e+A^1\ed,
\end{gathered}   
\end{equation}
which swaps the two factors in the idempotent decomposition of $\GL(n,\BC)$.

\section{Bicomplex fiber bundles}\label{sec:bicomplex-fiber-bundles}

In this section, we introduce the notion of a bicomplex fiber bundle, establish its fundamental idempotent decomposition, and derive the basic properties that will be used throughout the paper. Our treatment closely follows the classical theory of complex vector bundles, with the crucial difference that the structure group is the bicomplex general linear group $\GL(n,\BC)$. The main reference for the bundle-theoretic arguments is \cite[Chapter~1]{Ati1967}.

\subsection{Definition and first examples}

Let $B$ be a paracompact Hausdorff topological space.

\begin{defn}
    A \emph{bicomplex fiber bundle of rank $n$} over $B$ is a fiber bundle $\pi:E \to B$ with typical fiber $\BC^n$ and structure group $\GL(n,\BC)$. Equivalently, there exists an open cover $\{U_\alpha\}$ of $B$ and trivializations
    \begin{equation*}
        \phi_\alpha : \pi^{-1}(U_\alpha) \to U_\alpha \times \BC^n
    \end{equation*}
    such that the transition functions
    \begin{equation*}
        g_{\alpha\beta} : U_\alpha \cap U_\beta \to \GL(n,\BC)
    \end{equation*}
    are continuous and satisfy the usual cocycle conditions, and they also preserve the bicomplex structure.
\end{defn}

\begin{example}
    The \emph{trivial bundle} $E = B \times \BC^n$ with the projection onto the first factor is a bicomplex fiber bundle of rank $n$.
\end{example}

\begin{example}\label{ex: universal}
Let $\underline{\mathbb C}\e = B \times \mathbb C\e$ and $\underline{\mathbb C}\ed = B \times \mathbb C\ed$ denote the trivial complex line bundles whose fibers are the one-dimensional complex vector spaces $\mathbb C\e$ and $\mathbb C\ed$, respectively. Given a complex vector bundle $E^1 \to B$, we define
\begin{equation*}
    E^1 \e = E^1 \otimes_{\mathbb C} \underline{\mathbb C}\e.
\end{equation*}
Similarly, for a complex bundle \(E^2 \to B\),
\begin{equation*}
    E^2 \ed = E^2 \otimes_{\mathbb C} \underline{\mathbb C}\ed.
\end{equation*}
Since $\underline{\mathbb C}\e$ and $\underline{\mathbb C}\ed$ are trivial as complex line bundles, we have isomorphisms $E^1\e \cong E^1$ and $E^2\ed \cong E^2$ as complex vector bundles. Nevertheless, the notation $E^1\e$ and $E^2\ed$ is useful because it explicitly records the action of the idempotents: an element $v^1\e \in E^1\e$ is multiplied by \(\e\) as $(v^1\e)\e = v^1\e^2 = v^1\e$, and similarly for \(\ed\). We shall use this shorthand throughout the paper.

Let $\pi^1:E^1 \to B$ and $\pi^2:E^2 \to B$ be complex vector bundles of rank $n$. Then
\begin{equation*}
    E = (E^1 \otimes_{\mathbb C} \underline{\mathbb C}\e) \oplus (E^2 \otimes_{\mathbb C} \underline{\mathbb C}\ed)
\end{equation*}
is a bicomplex fiber bundle of rank $n$, with the bicomplex structure defined by the idempotent action. In the previous notation, we write
\begin{equation*}
    E \cong E^1\e \oplus E^2\ed.
\end{equation*}    
\end{example}

\begin{defn}\label{def:isomorphism}
    A homomorphism $\phi : E \to F$ between bicomplex fiber bundles $\pi : E \to B$ and $\pi' : F \to B$ is called a bicomplex homomorphism if for each $x \in B$, the map $\phi_{x} : E_{x} \to F_{x}$ is bicomplex linear. If, in addition, $\phi$ is an isomorphism and $\phi_{x}$ is a bicomplex isomorphism for each $x \in B$, then it is called a bicomplex isomorphism. 
\end{defn}

Henceforth, by isomorphism we always mean a bicomplex isomorphism, unless otherwise stated. We also denote by $\Buninf$ the set of isomorphism classes of bicomplex fiber bundles on $B$ and $\Bun$ the subset of $\Buninf$ given by bicomplex fiber bundles of rank $n$.

\subsection{Idempotent decomposition of bicomplex bundles}

Now, we write the converse of Example~\ref{ex: universal}, then we have,
\begin{prop}\label{prop:idempotent_bundles}
    Let \(\pi:E \to B\) be a bicomplex fiber bundle of rank \(n\). Then there exist complex vector bundles \(\pi^1:E^1 \to B\) and \(\pi^2:E^2 \to B\), each of rank \(n\), and an isomorphism of bicomplex bundles
    \begin{equation*}
        E \cong E^1\e \oplus E^2\ed.
    \end{equation*}
    Moreover, this decomposition is canonical up to isomorphism.
\end{prop}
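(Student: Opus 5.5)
The plan is to define $E^1$ and $E^2$ intrinsically, as the images of multiplication by the idempotents. Since each fiber $E_x$ is a $\BC$-module, multiplication by $\e$ and by $\ed$ gives fiberwise maps $P_x(v)=\e v$ and $Q_x(v)=\ed v$. Set
\begin{equation*}
E^1=\e E=\bigcup_{x\in B}\e E_x,\qquad E^2=\ed E=\bigcup_{x\in B}\ed E_x.
\end{equation*}
By the properties of $\e,\ed$ (idempotency, orthogonality, and $\e+\ed=1$), $P$ and $Q$ are complementary projections. Hence $E_x=\e E_x\oplus \ed E_x$ for every $x$, and the sum map $E^1\e\oplus E^2\ed\to E$ is a fiberwise bicomplex isomorphism: $\BC$ acts on $\e E_x$ through $\C\e$ and on $\ed E_x$ through $\C\ed$, which is exactly the module structure of Example~\ref{ex: universal}.

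The main step is to show that $E^1$ and $E^2$ are locally trivial complex vector bundles of rank $n$. I will do this with the trivializations $\phi_\alpha$. Because the transition functions are bicomplex linear, multiplication by $\e$ is well defined on $E$ and is carried by $\phi_\alpha$ to multiplication by $\e$ on $U_\alpha\times\BC^n$. Therefore $\phi_\alpha$ restricts to a homeomorphism
\begin{equation*}
\pi^{-1}(U_\alpha)\cap E^1\to U_\alpha\times \e\BC^n=U_\alpha\times\C^n\e .
\end{equation*}
Writing $g_{\alpha\beta}=g^1_{\alpha\beta}\e+g^2_{\alpha\beta}\ed$ and using \eqref{eq:matrix-multiplication}, the transition functions of $E^1$ are $g^1_{\alpha\beta}:U_\alpha\cap U_\beta\to\GL(n,\C)$, and those of $E^2$ are the $g^2_{\alpha\beta}$. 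These are continuous because the projection $\GL(n,\BC)\to\GL(n,\C)\e$ is continuous; it is a restriction of the linear map $M\mapsto M\e$. The cocycle conditions for the $g^i$ follow from those for $g$ by taking components. I also need $E^1$ to be closed with the subspace topology, so that the local pieces glue. This holds because $E^1$ is the fixed set of the continuous map $P$, and $P$ is continuous since its local expressions are continuous.

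For canonicity, suppose $E\cong F^1\e\oplus F^2\ed$ for some other complex bundles $F^1,F^2$, via a bicomplex isomorphism $\psi$. Bicomplex linearity means $\psi$ commutes with multiplication by $\e$, so $\psi$ maps $\e E=E^1$ isomorphically onto $\e(F^1\e\oplus F^2\ed)=F^1\e\cong F^1$, and likewise $E^2\cong F^2$. These maps are $\C$-linear and continuous, so they are isomorphisms of complex vector bundles, and the decomposition is unique up to isomorphism.

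I expect the main obstacle to be a point of rigor rather than a computation: the definition only requires the transition functions to "preserve the bicomplex structure". I must make explicit that this gives a globally well-defined continuous $\BC$-action on $E$, which follows because the transition functions are $\BC$-linear. With that in place, everything else reduces to taking components of matrices.
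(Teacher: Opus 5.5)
Your proposal is correct and follows essentially the same route as the paper: your subbundle $\e E$ is canonically the paper's $E\otimes_{\BC}\underline{\C}\e$, since $M\otimes_{\BC}\C\e\cong \e M$ for any $\BC$-module $M$. In both arguments the key step is reading off the transition functions $g^1_{\alpha\beta}$ and $g^2_{\alpha\beta}$ from the idempotent decomposition of $g_{\alpha\beta}$.

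Your write-up is a bit more careful than the paper's in two places. You check that the $\BC$-action is globally well defined. You also make ``canonical up to isomorphism'' precise: any bicomplex isomorphism $E\cong F^1\e\oplus F^2\ed$ commutes with multiplication by $\e$ and $\ed$, so it restricts to complex isomorphisms $E^1\cong F^1$ and $E^2\cong F^2$. The paper only observes that no coordinate choices are involved.
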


\begin{proof}
  We define the complex bundles
  \begin{equation*}
  E^1\e= E \otimes_{\BC} \underline{\C}\e \quad \text{and}\quad
  E^2\ed= E \otimes_{\BC} \underline{\C}\ed.
  \end{equation*}
  Since $\BC \cong \C\mathbf{e} \oplus \C\ed$ as $\BC$-modules and tensor products distribute over direct sums, the canonical isomorphism $E \cong E \otimes_{\BC} \BC$ induces
  \begin{equation*}
  E \cong (E \otimes_{\BC} \C\mathbf{e}) \oplus (E \otimes_{\BC} \C\ed) = E^1\e \oplus E^2\ed.
  \end{equation*}
  This decomposition is intrinsic, as it depends only on the algebraic splitting of the ring $\BC$ and not on any choice of cover or trivializations.

  To verify that $E^1\mathbf{e}$ and $E^2\ed$ are complex bundles of rank $n$, let $\{U_\alpha\}$ be a trivializing cover for $E$ with transition functions $g_{\alpha\beta}: U_{\alpha\beta} \to \mathrm{GL}(n,\BC)$. The idempotent decomposition writes $g_{\alpha\beta}=g_{\alpha\beta}^1\e+g_{\alpha\beta}^2\ed$, with $g_{\alpha\beta}^1,g_{\alpha\beta}^2\in \mathrm{GL}(n,\C)$. Under the local trivialization $\phi_\alpha: E|_{U_\alpha} \to U_\alpha \times \BC^n$, the induced bundle $(E^1\mathbf{e})|_{U_\alpha}$ is identified with $U_\alpha \times \C^n$ via
  \begin{gather*}
  \phi_\alpha^1: (E^1\e)|_{U_\alpha} \to U_\alpha \times \C^n, \\ v^1\e \mapsto v^1.
  \end{gather*}
  A direct computation shows that the transition function of $E^1\e$ on $U_{\alpha\beta}$ is precisely $g_{\alpha\beta}^1$; similarly, that of $E^2\ed$ is $g_{\alpha\beta}^2$. Hence both are complex bundles of rank $n$.

  Finally, since the construction via tensor products with the global bundles $\underline{\C}\e$ and $\underline{\C}\ed$ involves no coordinate choices, the decomposition is canonical up to isomorphism. 
\end{proof}

We shall refer to $E^1$ and $E^2$ as the \emph{idempotent complex bundles} associated to $E$.

If $\phi:E \to F$ is a homomorphism of bicomplex fiber bundles, then its restriction to each fiber $\phi_x:E_x \to F_x$ is $\BC$-linear and hence decomposes as $\phi_x = \phi_x^1 \e + \phi_x^2 \ed$. Consequently, the homomorphism itself admits an idempotent decomposition
    \begin{equation*}
        \phi = \phi^1 \e + \phi^2 \ed,
    \end{equation*}
    where $\phi^1:E^1 \to F^1$ and $\phi^2:E^2 \to F^2$ are complex linear maps.
 
\begin{remark}\label{remark:idptbundles} According to Definition \ref{def:isomorphism}, two bicomplex fiber bundles over a space $B$ are bicomplex isomorphic if and only if the associated idempotent complex bundles are complex isomorphic. In particular, a bicomplex fiber bundle is trivial if and only if its idempotent complex bundles are trivial.
\end{remark}

\subsection{Bundle operations}

The standard operations on fiber bundles are compatible with the idempotent decomposition and will be used extensively in the sequel. Let $E = E^1 \e \oplus E^2 \ed$ and $F = F^1 \e \oplus F^2 \ed$ be bicomplex bundles over $B$.

\begin{itemize}
    \item \textbf{Direct sum:}
    \begin{equation*}
        E \oplus F = (E^1 \oplus F^1)\e \oplus (E^2 \oplus F^2)\ed.
    \end{equation*}

    \item \textbf{Tensor product:} Using the idempotent decomposition of $\BC$-modules, mixed tensor products vanish, yielding
    \begin{equation*}
        E \otimes_{\BC} F = (E^1 \otimes_\C F^1)\e \oplus (E^2 \otimes_\C F^2)\ed. 
    \end{equation*}

    \item \textbf{Pullback:} For a continuous map $f:B' \to B$,
    \begin{equation*}
        f^*(E) = f^*(E^1)\e \oplus f^*(E^2)\ed.
    \end{equation*}
    \item \textbf{Dual:} If $E^*$ denotes the dual bicomplex fiber bundle,
    \begin{equation*}
        E^*\cong {E^1}^*\e\oplus {E^2}^*\ed.
    \end{equation*}
\end{itemize}

\subsection{Sections and triviality criterion}

Let $\Gamma(B,E)$ denote the set of bicomplex sections of $E \to B$. Using the projections $p_1:E \to E^1\e$ and $p_2:E \to E^2\ed$, any section $s:B \to E$ decomposes as
\begin{equation*}
    s = s^1 \e + s^2 \ed,
\end{equation*}
where $s^1 \in \Gamma(B,E^1)$ and $s^2 \in \Gamma(B,E^2)$ are complex sections.

\begin{example}\label{ex:regular_from_zero_divisors}
Let $B=\mathbb{C}\mathbb{P}^{1}\times\mathbb{C}\mathbb{P}^{1}$ and denote by 
$p_{1},p_{2}\colon B\to\mathbb{C}\mathbb{P}^{1}$ the projections onto the two factors. 
Let $\gamma_{\mathbb{C}}\to\mathbb{C}\mathbb{P}^{1}$ be the complex tautological line bundle.

Consider the two bicomplex line bundles over $B$ defined by
\begin{equation*}
L_{1}=\bigl(p_{1}^{*}\gamma_{\C}\bigr)\e\oplus\underline{\C}\ed
\quad\text{and}\quad
L_{2}=\underline{\mathbb{C}}\,\mathbf{e}\;\oplus\;\bigl(p_{2}^{*}\gamma_{\mathbb{C}}\bigr)\mathbf{e}^{\dagger},
\end{equation*}
and set $E=L_{1}\oplus L_{2}$, a bicomplex bundle of rank $2$.

Since $\gamma_{\mathbb{C}}$ has no non-zero global sections, every section of $L_{1}$ is of the form 
$\sigma^{1}\mathbf{e}^{\dagger}$ with $\sigma^{1}\in\Gamma(B,\underline{\mathbb{C}})$; likewise, every section of $L_{2}$ is of the form 
$\sigma^{2}\mathbf{e}$ with $\sigma^{2}\in\Gamma(B,\underline{\mathbb{C}})$. 
Take the constant sections
\begin{equation*}
s^{1}=0\cdot\e+1\cdot\ed\in\Gamma(B,L_{1})\quad\text{and}\quad
s^{2}=1\cdot\e+0\cdot\ed\in\Gamma(B,L_{2}).
\end{equation*}
Viewed as sections of the summands of $E$, both $s^{1}$ and $s^{2}$ are pointwise zero divisors: 
at each $x\in B$ we have $s^{1}(x)\in\mathbb{C}\mathbf{e}^{\dagger}$ and $s^{2}(x)\in\mathbb{C}\mathbf{e}$.

Now form the global section $s=(s^{1},s^{2})\in\Gamma(B,E)$. 
Using the canonical identification $E_{x}\cong\mathbb{B}\mathbb{C}^{2}$, its idempotent decomposition at any point is
\begin{equation*}
s(x)=\begin{pmatrix}0\\ 1\end{pmatrix}\mathbf{e}+\begin{pmatrix}1\\ 0\end{pmatrix}\mathbf{e}^{\dagger}.
\end{equation*}
Both complex components are non-zero vectors in $\mathbb{C}^{2}$, hence $s(x)\notin\mathcal{N}_{2}$ for every $x\in B$. Therefore $s$ is a regular section of $E$.

This shows that regularity is a property of the idempotent decomposition of the total vector in the fiber, not of the individual summands: two pointwise zero-divisor sections can combine to produce a regular section     of the direct sum.
\end{example}

\begin{defn}
    A section $s \in \Gamma(B,E)$ is called \emph{regular} if $s(x)$ is a regular vector in the fiber $E_x \cong \BC^n$ for every $x \in B$. Equivalently, both complex sections $s^1$ and $s^2$ are nowhere vanishing.
\end{defn}

The following criterion generalizes the classical characterization of trivial complex vector bundles.

\begin{prop}
    A bicomplex fiber bundle $\pi:E \to B$ of rank $n$ is trivial if and only if it admits $n$ linearly independent regular sections $s_1,\dots,s_n$.
\end{prop}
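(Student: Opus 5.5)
The plan is to reduce the statement to the classical criterion for complex vector bundles via the idempotent decomposition of Proposition~\ref{prop:idempotent_bundles} and Remark~\ref{remark:idptbundles}. The first step is to fix the meaning of ``linearly independent'': sections $s_1,\dots,s_n$ are linearly independent if, for every $x\in B$, the vectors $s_1(x),\dots,s_n(x)$ are $\BC$-linearly independent in $E_x$. This means that $\sum_k \lambda_k s_k(x)=0$ with $\lambda_k\in\BC$ forces $\lambda_k=0$. Writing $s_k=s_k^1\e+s_k^2\ed$ and $\lambda_k=\lambda_k^1\e+\lambda_k^2\ed$, the relation $\sum_k\lambda_k s_k(x)=0$ splits as
\[
\Bigl(\sum_k\lambda_k^1 s_k^1(x)\Bigr)\e+\Bigl(\sum_k\lambda_k^2 s_k^2(x)\Bigr)\ed=0 .
\]
Therefore $\BC$-independence of the $s_k(x)$ is equivalent to $\C$-independence of both families $\{s_k^1(x)\}$ in $E^1_x$ and $\{s_k^2(x)\}$ in $E^2_x$. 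This pointwise lemma is the key computation.

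For the direction ``trivial $\Rightarrow$ sections'', take a trivialization $\phi:E\to B\times\BC^n$. Set $s_k(x)=\phi^{-1}(x,\epsilon_k)$, where $\epsilon_k$ is the standard basis vector. The idempotent components of $\epsilon_k$ are $\epsilon_k^{\C}$ in both factors, which are nonzero, so each $s_k$ is regular. The sections are independent because $\phi_x$ is a $\BC$-isomorphism.

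For the converse, suppose $s_1,\dots,s_n$ are regular and linearly independent. By the lemma, the families $s_1^j,\dots,s_n^j$ are pointwise linearly independent sections of the rank-$n$ complex bundle $E^j$, for $j=1,2$. Hence they are global frames, and the classical criterion gives $E^j\cong B\times\C^n$ via $(x,c)\mapsto\sum_k c_k s_k^j(x)$. Remark~\ref{remark:idptbundles} then shows that $E\cong E^1\e\oplus E^2\ed$ is trivial. Concretely, the map $B\times\BC^n\to E$, $(x,\lambda)\mapsto\sum_k\lambda_k s_k(x)$, is a continuous bicomplex-linear bijection on fibres, and it is a homeomorphism by the usual local-trivialization argument, since its inverse is given locally by inverting a continuous $\GL(n,\BC)$-valued matrix.

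The main subtlety, rather than a true obstacle, is the role of regularity. Pointwise $\BC$-independence of $n$ vectors in $\BC^n$ already forces both component families to be bases, so in particular each $s_k^j$ is nowhere zero and regularity is automatic. I would note this explicitly, so that the regularity hypothesis is seen to be consistent and not an extra condition. I would also stress why ``linearly independent'' must be taken over $\BC$. If it were read only over $\R$ or $\C$, one could produce counterexamples with zero-divisor sections, as in Example~\ref{ex:regular_from_zero_divisors}. With the $\BC$-interpretation fixed, the remaining steps are routine.
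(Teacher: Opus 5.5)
Your argument is correct and is essentially the paper's: both show that $(x,\lambda)\mapsto\sum_k\lambda_k s_k(x)$ is a fiberwise $\BC$-isomorphism because the idempotent components $\{s_k^1(x)\}$ and $\{s_k^2(x)\}$ are complex bases, i.e.\ $A(x)=A^1(x)\e+A^2(x)\ed$ with both $A^j(x)$ invertible. Your version is more complete than the paper's: the paper proves only the ``if'' direction and takes for granted both the equivalence between $\BC$-independence and componentwise $\C$-independence and the continuity of the inverse, all of which you supply, and you also correctly observe that regularity is then automatic.
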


\begin{proof}
    Define a map
    \begin{gather*}
        \phi : B \times \BC^n \to E, \\ 
        \phi(x,(v_1,\dots,v_n)) = \sum_{i=1}^n v_i s_i(x).
    \end{gather*}
    Since each $s_i$ is regular and linearly independent, the fibers $\phi_x : \BC^n \to E_x$ are isomorphisms of $\BC$-modules for every $x \in B$. 
    
    To see that \(\phi\) is a fiberwise isomorphism, let \(A(x) \in \mathrm{M}_n(\BC)\) be the matrix whose columns are the coordinates of \(s_1(x),\dots,s_n(x)\) in a bicomplex trivialization of \(E_x\). The idempotent decomposition gives
\begin{equation*}
A(x) = A^1(x)\e + A^2(x)\ed,
\end{equation*}
Since the sections $\{s_i\}$ are linearly independent, their idempotent
components $\{s_i^1\}$ and $\{s_i^2\}$ are linearly independent in
$\mathbb{C}^n$. Hence the matrices $A^1$ and $A^2$ are invertible.
Therefore, the bicomplex matrix $A$ is invertible.
Therefore, \(\phi_x: \BC^n \to E_x\) is an isomorphism of \(\BC\)-modules for every \(x \in B\).
    
\end{proof}

\subsection{Bicomplex bundles over compact spaces}

We now collect several useful facts for bundles over compact base spaces. These are direct consequences of the idempotent decomposition and the corresponding results for complex vector bundles.

\begin{lem}
    Let $B$ be compact, $B' \subset B$ a closed subspace, and $\pi:E \to B$ a bicomplex fiber bundle.
    \begin{enumerate}
        \item Any bicomplex section $s:B' \to E|_{B'}$ can be continuously extended to a section over all of $B$.
        \item Let $\pi':F \to B$ be another bicomplex fiber bundle. If $\phi:E|_{B'} \to F|_{B'}$ is an isomorphism, then there exists an open neighborhood $U$ of $B'$ in $B$ and an isomorphism $\widetilde{\phi}:E|_U \to F|_U$ extending $\phi$.
    \end{enumerate}
\end{lem}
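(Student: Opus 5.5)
The plan is to reduce both statements to the corresponding classical facts for complex vector bundles over compact Hausdorff spaces, as in \cite[Chapter~1]{Ati1967}, by means of the idempotent decomposition of Proposition~\ref{prop:idempotent_bundles}. Concretely, fix bicomplex isomorphisms $E\cong E^1\e\oplus E^2\ed$ and $F\cong F^1\e\oplus F^2\ed$. Restriction to $B'$ commutes with this decomposition, since restriction is a pullback along the inclusion $B'\hookrightarrow B$ and pullbacks respect the decomposition (see the bundle operations listed above). Hence $E|_{B'}\cong E^1|_{B'}\e\oplus E^2|_{B'}\ed$, and similarly for $F$.

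For (1), write the given section as $s=s^1\e+s^2\ed$ using the projections $p_1,p_2$. Here $s^1\in\Gamma(B',E^1|_{B'})$ and $s^2\in\Gamma(B',E^2|_{B'})$ are continuous complex sections. By the classical extension lemma, each $s^i$ extends to a section $\widetilde{s}^i\in\Gamma(B,E^i)$. That lemma is proved with local trivializations, Tietze's extension theorem and a partition of unity, all of which are available because $B$ is compact Hausdorff and hence normal. Then $\widetilde s=\widetilde s^1\e+\widetilde s^2\ed$ is a continuous section of $E$ extending $s$. It is automatically a bicomplex section, since any pair of complex sections assembles into a section of $E^1\e\oplus E^2\ed$. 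No regularity is required, so nothing more needs to be checked.

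For (2), decompose $\phi=\phi^1\e+\phi^2\ed$ as in the discussion after Proposition~\ref{prop:idempotent_bundles}. By Remark~\ref{remark:idptbundles}, or directly by invertibility of $\phi_x$ in the decomposition of $\GL(n,\BC)$, each $\phi^i\colon E^i|_{B'}\to F^i|_{B'}$ is a complex isomorphism. Apply the classical result to get open neighborhoods $U_i\supset B'$ and complex isomorphisms $\widetilde\phi^i\colon E^i|_{U_i}\to F^i|_{U_i}$ extending $\phi^i$. Set $U=U_1\cap U_2$, which is still an open neighborhood of $B'$, and define $\widetilde\phi=\widetilde\phi^1|_U\,\e+\widetilde\phi^2|_U\,\ed$. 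This map is fiberwise $\BC$-linear, with fiberwise inverse $(\widetilde\phi^1)^{-1}\e+(\widetilde\phi^2)^{-1}\ed$, so it is a bicomplex isomorphism extending $\phi$.

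The only point needing care is that everything is compatible with the chosen identifications $E\cong E^1\e\oplus E^2\ed$. The extensions are built on the components and then transported back through these fixed global isomorphisms, which are defined over all of $B$, so the restriction of the transported object to $B'$ is the original one. In this sense the main obstacle is bookkeeping rather than substance. If a self-contained argument is preferred over citing Atiyah, the hardest step would be the classical extension of the isomorphism in (2). There one views $\phi$ as a section of $\operatorname{Hom}(E,F)|_{B'}$, extends it to all of $B$ by part (1), and uses that invertibility is an open condition to find $U$. In the bicomplex setting, invertibility means $\det$ is a unit, i.e.\ both component determinants are nonzero, which is again an open condition on $\mathrm{M}_n(\BC)$.
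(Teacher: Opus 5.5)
Your proposal is correct and follows the same route as the paper. The paper also applies Atiyah's Lemmas 1.4.1 and 1.4.2 to the idempotent components and recombines the resulting extensions; your explicit choice $U=U_1\cap U_2$ and the remark about compatibility with the fixed global decomposition $E\cong E^1\e\oplus E^2\ed$ fill in bookkeeping the paper leaves implicit.
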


\begin{cor}\label{cor:homotopy_compact}
    Let $B$ and $B'$ be compact spaces.
    \begin{enumerate}
        \item If $f_t:B' \to B$ is a homotopy, then $f_0^*(E) \cong f_1^*(E)$.
        \item A homotopy equivalence $f:B' \to B$ induces a bijection $f^*:\Bun[n]\to\operatorname{Bun}_n^\BC(B')$.
        \item If $B$ is contractible, then every bicomplex fiber bundle over $B$ is trivial.
    \end{enumerate}
\end{cor}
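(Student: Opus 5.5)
We prove Corollary~\ref{cor:homotopy_compact} by reducing each item to the corresponding classical statement for complex vector bundles over compact spaces, through the idempotent decomposition of Proposition~\ref{prop:idempotent_bundles} and Remark~\ref{remark:idptbundles}. The key observation is that pullback commutes with the decomposition: $f^*(E)=f^*(E^1)\e\oplus f^*(E^2)\ed$, as recorded in the list of bundle operations. So any question about bicomplex isomorphism of pullbacks becomes a pair of questions about complex isomorphism, one for each idempotent component.

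For item (1), write $E\cong E^1\e\oplus E^2\ed$. Then for $t=0,1$ we have
\[
f_t^*(E)\cong f_t^*(E^1)\e\oplus f_t^*(E^2)\ed .
\]
By the classical homotopy invariance of pullbacks of complex bundles over a compact base (\cite[Chapter~1]{Ati1967}), we get $f_0^*(E^i)\cong f_1^*(E^i)$ for $i=1,2$. Remark~\ref{remark:idptbundles} then gives $f_0^*(E)\cong f_1^*(E)$ as bicomplex bundles.

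Alternatively, one can imitate Atiyah's proof directly. Set $F=f^*E$ over $B'\times[0,1]$, where $f$ is the homotopy. Use part (2) of the preceding lemma to extend the identity isomorphism of $F|_{B'\times\{t\}}$ and $\pi^*(F|_{B'\times\{t\}})$ to a neighborhood. Compactness of $[0,1]$ then shows that the isomorphism class of $F|_{B'\times\{t\}}$ is locally constant in $t$. I will use the reduction route, since it is shorter. The only point needing care is that an isomorphism assembled as $\phi^1\e+\phi^2\ed$ is $\BC$-linear fiberwise; this holds because it commutes with multiplication by $\e$ and $\ed$ and is $\C$-linear on each summand.

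For item (2), let $g:B\to B'$ be a homotopy inverse of $f$. Item (1) gives $(f\circ g)^*E\cong E$ and $(g\circ f)^*E'\cong E'$. Combined with functoriality $(f\circ g)^*=g^*f^*$, this shows that $f^*$ and $g^*$ are mutually inverse on isomorphism classes. Rank is preserved by pullback, so $f^*$ restricts to a bijection $\Bun[n]\to\operatorname{Bun}_n^\BC(B')$.

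For item (3), $B$ is contractible, so the identity is homotopic to a constant map $c$ at a point $x_0\in B$. By item (1), $E\cong c^*E=B\times E_{x_0}\cong B\times\BC^n$, which is trivial.

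The only potential obstacle is that the corollary does not say explicitly whether the homotopy $f_t$ is a continuous map $B'\times[0,1]\to B$. I will interpret it this way; then $B'\times[0,1]$ is compact, and the classical theorem applies.
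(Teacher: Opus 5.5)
Your proposal is correct and is essentially the paper's argument. The paper also reduces to Atiyah's complex results through the idempotent decomposition, but it does so at the level of the extension lemma (Atiyah's Lemmas 1.4.1--1.4.2) and then runs Atiyah's standard argument, which is exactly your ``alternative'' route; your main route instead reduces directly to Atiyah's homotopy-invariance lemma for the components and derives (2) and (3) formally from (1), with the bicomplex-linearity of the assembled isomorphism correctly checked.
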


The proof of the lemma follows by applying the complex version (\cite[Lemmas 1.4.1 and 1.4.2]{Ati1967}) to the idempotent components $E^1$ and $E^2$, and then taking the product of the resulting extensions. The corollary is then standard.

\subsection{Ample subspaces and finite-dimensional reductions}

Following \cite[Lemma~1.4.12]{Ati1967}, we recall the notion of an ample subspace of sections.

\begin{defn}
    A subspace $V \subset \Gamma(B,E)$ is called \emph{ample} if the evaluation map
    \begin{gather*}
        \ev : B \times V \to E, \\ (x,s) \mapsto s(x)
    \end{gather*}
    is surjective.
\end{defn}

Using a partition of unity subordinate to a trivializing cover, one can construct ample subspaces locally and then glue them together.

\begin{prop}
    Let $\pi:E \to B$ be a bicomplex fiber bundle. Then there exists an ample subspace $V \subset \Gamma(B,E)$. If $B$ is compact, one may choose $V$ to be finite-dimensional.
\end{prop}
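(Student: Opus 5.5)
The plan is to reduce to the complex case through the idempotent decomposition of Proposition~\ref{prop:idempotent_bundles}. Write $E\cong E^1\e\oplus E^2\ed$ with $E^1,E^2$ complex vector bundles of rank $n$ over $B$. By \cite[Lemma~1.4.12]{Ati1967} (and its paracompact analogue, which uses a locally finite trivializing cover and a subordinate partition of unity), there are ample subspaces $V^1\subset\Gamma(B,E^1)$ and $V^2\subset\Gamma(B,E^2)$. These can be taken finite-dimensional when $B$ is compact. Then define
\begin{equation*}
V=V^1\e\oplus V^2\ed=\{\,s^1\e+s^2\ed \mid s^1\in V^1,\ s^2\in V^2\,\}\subset\Gamma(B,E).
\end{equation*}
This is a $\BC$-submodule of sections, because $\e$ and $\ed$ act as projections onto the two summands. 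Its real dimension is $2(\dim_\C V^1+\dim_\C V^2)$, which is finite in the compact case.

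Next we check ampleness. Fix $x\in B$ and a vector $v=v^1\e+v^2\ed\in E_x$. Surjectivity of the evaluation maps for $V^1$ and $V^2$ gives sections $s^1\in V^1$ and $s^2\in V^2$ with $s^1(x)=v^1$ and $s^2(x)=v^2$. Then $s=s^1\e+s^2\ed\in V$ satisfies $s(x)=v$, so $\ev:B\times V\to E$ is surjective. Continuity of the sections is automatic, since the projections $p_1,p_2$ and the inclusions of the summands are continuous bundle maps.

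If we want to avoid citing the complex result, the construction can be done directly. Take a trivializing cover $\{U_\alpha\}$ for $E$ with trivializations $\phi_\alpha$, and a partition of unity $\{\rho_\alpha\}$ subordinate to it; in the compact case choose finitely many $\alpha$. For each $\alpha$ and each standard basis vector $\epsilon_k\in\BC^n$, define
\begin{equation*}
s_{\alpha,k}(x)=\rho_\alpha(x)\,\phi_\alpha^{-1}(x,\epsilon_k)\quad\text{on } U_\alpha, \qquad s_{\alpha,k}=0 \text{ outside } U_\alpha .
\end{equation*}
Let $V$ be the $\BC$-span of all the $s_{\alpha,k}$. At a point $x$, pick $\alpha$ with $\rho_\alpha(x)>0$; then the values $s_{\alpha,k}(x)$ form a $\BC$-basis of $E_x$, which gives surjectivity.

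The only delicate point is continuity of each $s_{\alpha,k}$ on all of $B$. This requires that the support of $\rho_\alpha$ be closed and contained in $U_\alpha$, which we arrange in the usual way for a paracompact Hausdorff space. Apart from this, the argument is routine, because the idempotent splitting reduces everything to the complex case.
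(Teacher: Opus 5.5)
Your proof is correct. The paper gives no formal proof. Its only justification is the sentence before the proposition: take a partition of unity subordinate to a trivializing cover, build sections locally, and glue. That is exactly your second, direct construction, and you correctly identify the one point needing care: $\operatorname{supp}\rho_\alpha\subset U_\alpha$, so that extension by zero is continuous.

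Your primary route is a genuinely different argument. You reduce to ample subspaces $V^1, V^2$ of the idempotent components via Proposition~\ref{prop:idempotent_bundles} and set $V=V^1\e\oplus V^2\ed$. This is how the paper treats the preceding extension lemma, though not this proposition. It gives a clean pointwise surjectivity check and makes the finite-dimensionality count immediate. However, $V^1\e\oplus V^2\ed$ is a free $\BC$-module only when $\dim_\C V^1=\dim_\C V^2$. That is harmless for the proposition as stated. It does matter for how the result is used in Corollary~\ref{cor:epimorphism}, which identifies $V$ with some $\BC^N$. There you would either enlarge the smaller $V^i$ or surject $\BC^N$, with $N=\max_i\dim_\C V^i$, onto $V$.

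The direct construction needs no decomposition. It comes with a built-in $\BC$-linear surjection from a free module, with one generator per pair $(\alpha,k)$, onto $V$. That is precisely the form in which the epimorphism $B\times\BC^N\to E$ is needed downstream.
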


\begin{cor}\label{cor:epimorphism}
    Let $\pi:E \to B$ be a bicomplex bundle, where $B$ is compact.
    \begin{enumerate}
        \item There exists an epimorphism $\phi:B \times \BC^n \to E$ for some integer $n$.
        \item There exists a bicomplex bundle $F \to B$ such that $E \oplus F$ is trivial.
    \end{enumerate}
\end{cor}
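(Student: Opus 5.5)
The plan is to derive both parts of Corollary~\ref{cor:epimorphism} from the preceding proposition on ample subspaces, following \cite[Corollary~1.4.13]{Ati1967}. For each step we have two options: argue directly with bicomplex modules, or reduce through the idempotent decomposition of Proposition~\ref{prop:idempotent_bundles} to the complex case applied to $E^1$ and $E^2$. We use the direct argument for (1) and the reduction as a cross-check.

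For (1), since $B$ is compact, the proposition provides a finite-dimensional ample subspace $V\subset\Gamma(B,E)$. The first thing to settle is that $V$ can be taken to be a $\BC$-submodule. If $V$ is only a complex subspace, we replace it by $V+\e V+\ed V$, which is still finite-dimensional and still ample. A finitely generated $\BC$-module is isomorphic to $V^1\e\oplus V^2\ed$, and by enlarging we may assume $\dim_\C V^1=\dim_\C V^2=n$, so that $V\cong\BC^n$ is free. Concretely, we take complex ample subspaces $W^1\subset\Gamma(B,E^1)$ and $W^2\subset\Gamma(B,E^2)$, pad the smaller one with zero sections (or with extra sections) so both have dimension $n$, and set $V=W^1\e\oplus W^2\ed$. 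The evaluation map $\ev:B\times V\to E$ is then $\BC$-linear on each fibre and continuous. It is surjective because its components $B\times W^i\to E^i$ are surjective. This gives the required epimorphism $\phi:B\times\BC^n\to E$.

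For (2), we put the standard bicomplex inner product of Definition~\ref{def:inner-product} on the trivial bundle $B\times\BC^n$ and set $F=\ker\phi$. To see that $F$ is a bicomplex bundle, we decompose $\phi=\phi^1\e+\phi^2\ed$. Each $\phi^i:B\times\C^n\to E^i$ is a surjection of complex bundles, so by \cite[Chapter~1]{Ati1967} the kernel $\ker\phi^i$ is a complex subbundle of rank $n-\operatorname{rank}E$. Since the two ranks agree, $F=\ker\phi^1\e\oplus\ker\phi^2\ed$ is a bicomplex bundle, and by Example~\ref{ex: universal} it has constant bicomplex rank. Taking orthogonal complements componentwise, $(\ker\phi^i)^\perp$ is mapped isomorphically onto $E^i$ by $\phi^i$. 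It follows that
\[
B\times\BC^n\cong F\oplus E
\]
as bicomplex bundles, by Remark~\ref{remark:idptbundles} and the direct sum formula.

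The main obstacle is making sure every object lives in the bicomplex category with equal ranks for the two idempotent components. This concerns both the ample subspace, which must be a free $\BC$-module, and the kernel $F$, whose components must have the same rank. Working componentwise and padding dimensions settles both points. Continuity and local triviality then follow from the complex case.
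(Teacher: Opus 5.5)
Your proposal is correct and follows the paper's route: a finite-dimensional ample space of sections yields the epimorphism $B\times\BC^n\to E$, and its kernel serves as the complement $F$. Your componentwise padding and orthogonal-complement splitting make explicit what the paper's sketch leaves implicit: $V$ must be a free $\BC$-module for $n=\dim V$ to make sense, and a global (not merely fibrewise) splitting of the kernel needs a metric rather than just projectivity of $\BC^n$. (Read ``padding with zero sections'' as sending the extra basis vectors of $\C^n$ to $0$.)
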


The first part follows from the existence of a finite-dimensional ample $V$ by taking $n = \dim V$ and identifying $B \times V$ with $B \times \BC^n$. The second part is obtained by taking $F$ to be the kernel of the epimorphism, which splits since $\BC^n$ is a projective $\BC$-module.

\section{Bicomplex Grassmannian and homotopy theorem}

In this section, we provide a homotopical classification of bicomplex fiber bundles. The main result is that isomorphism classes of bicomplex fiber bundles of rank $n$ over a paracompact space $B$ are in bijection with homotopy classes of maps from $B$ into the infinite bicomplex Grassmannian $\Gr_m(\BC^\infty)$. This is the bicomplex analogue of the classical classification theorem for complex vector bundles.

\subsection{The bicomplex Grassmannian}

Let $0\leq m \le n$ be positive integers.

\begin{defn}
    The \emph{bicomplex Grassmannian} $\Gr_m(\BC^n)$ is the set of all free $\BC$-submodules of rank $m$ of $\BC^n$.
\end{defn}

Similarly, let $\mathrm{V}_m(\BC^n)$ denote the \emph{Stiefel manifold} of $m$-frames in $\BC^n$; that is, the space of $m$-tuples of linearly independent regular vectors in $\BC^n$. There is a natural surjection
\begin{equation*}
    \mathrm{V}_m(\BC^n) \to \Gr_m(\BC^n)
\end{equation*}
that sends an $m$-frame to the free $\BC$-submodule it spans. We give $\Gr_m(\BC^n)$ the quotient topology induced by this map.

The following proposition shows that the bicomplex Grassmannian is, topologically, a product of two complex Grassmannians.

\begin{prop}
    There is a homeomorphism
    \begin{equation*}
        \Gr_m(\BC^n) \cong \Gr_m(\C^n) \times \Gr_m(\C^n).
    \end{equation*}
\end{prop}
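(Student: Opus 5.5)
The plan is to build the homeomorphism at the level of Stiefel manifolds, where everything is explicit, and then descend to the quotients. Write each vector of $\BC^n$ as $v=v^1\e+v^2\ed$. A free $\BC$-submodule $W\subset\BC^n$ of rank $m$ then splits as $W=W\e\oplus W\ed$, with $W\e=W^1\e$ and $W\ed=W^2\ed$ for complex subspaces $W^1,W^2\subset\C^n$. Define
\begin{equation*}
\Phi:\Gr_m(\BC^n)\to\Gr_m(\C^n)\times\Gr_m(\C^n),\qquad W\mapsto (W^1,W^2).
\end{equation*}
Freeness gives the dimensions. If $W$ has a basis $s_1,\dots,s_m$, then, as in the proof of the triviality criterion, the components $s_1^1,\dots,s_m^1$ and $s_1^2,\dots,s_m^2$ are each linearly independent in $\C^n$. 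So both $W^1$ and $W^2$ have complex dimension exactly $m$, and $\Phi$ is well defined.

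\textbf{Bijectivity.} The inverse sends $(W^1,W^2)$ to $W^1\e\oplus W^2\ed$. To see that this module is free of rank $m$, choose bases $a_1,\dots,a_m$ of $W^1$ and $b_1,\dots,b_m$ of $W^2$ and set $s_i=a_i\e+b_i\ed$. These vectors are regular and $\BC$-linearly independent, and they span $W^1\e\oplus W^2\ed$. Composing the two maps in either order gives the identity, because of the uniqueness of the idempotent decomposition.

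\textbf{Continuity.} On frames, the map
\begin{equation*}
\mathrm{V}_m(\BC^n)\to \mathrm{V}_m(\C^n)\times\mathrm{V}_m(\C^n),\qquad (s_i)\mapsto\big((s_i^1),(s_i^2)\big)
\end{equation*}
is a homeomorphism. It is the restriction of the linear isomorphism $\BC^{nm}\cong\C^{nm}\times\C^{nm}$, and regular independent frames correspond exactly to pairs of independent complex frames. Here I take $\mathrm{V}_m(\C^n)$ to mean non-orthonormal frames, i.e. $m$-tuples of linearly independent vectors; orthonormal frames would work equally well after Gram--Schmidt. This frame map intertwines the spanning maps, so the square
\begin{equation*}
\begin{array}{ccc}
\mathrm{V}_m(\BC^n) & \longrightarrow & \mathrm{V}_m(\C^n)\times\mathrm{V}_m(\C^n)\\
\downarrow & & \downarrow\\
\Gr_m(\BC^n) & \xrightarrow{\ \Phi\ } & \Gr_m(\C^n)\times\Gr_m(\C^n)
\end{array}
\end{equation*}
commutes. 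Since $\Gr_m(\BC^n)$ carries the quotient topology, $\Phi$ is continuous. Likewise $\Phi^{-1}$ is continuous provided the right-hand vertical map is a quotient map for the product topology.

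\textbf{Main obstacle.} The delicate point is exactly this last claim: a product of quotient maps need not be a quotient map. I would handle it by noting that $\mathrm{V}_m(\C^n)\to\Gr_m(\C^n)$ is an \emph{open} map. The saturation of an open set $U$ is $\bigcup_{g\in\GL(m,\C)}U\cdot g$, which is open. A product of open surjections is again an open surjection, hence a quotient map.

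A cleaner alternative is compactness. Gram--Schmidt, which splits componentwise as in \S\ref{subsec:gram-schmidt}, lets me replace $\mathrm{V}_m$ by orthonormal frames without changing the quotient topology. Then $\Gr_m(\BC^n)$ is compact, the target is Hausdorff, and the continuous bijection $\Phi$ is automatically a homeomorphism.
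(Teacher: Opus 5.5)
Your proposal is correct and uses the same bijection as the paper: $W\mapsto (W^1,W^2)$ from the idempotent decomposition, with inverse $(W^1,W^2)\mapsto W^1\e\oplus W^2\ed$. The paper only asserts that this bijection is continuous in both directions. Your Stiefel-level argument supplies the justification it omits: the frame homeomorphism $\mathrm{V}_m(\BC^n)\cong\mathrm{V}_m(\C^n)\times\mathrm{V}_m(\C^n)$, together with openness of $\mathrm{V}_m(\C^n)\to\Gr_m(\C^n)$ (or the compact--Hausdorff shortcut), shows that the product of the quotient maps is again a quotient map.
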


\begin{proof}
    Let $V \subset \BC^n$ be a free $\BC$-submodule of rank $m$. By the idempotent decomposition, $V$ decomposes as
    \begin{equation*}
        V = V^1\e \oplus V^2\ed,
    \end{equation*}
    where $V^1 \subset \C^n$ and $V^2 \subset \C^n$ are complex subspaces. Since $V$ has rank $m$ over $\BC$, and $\BC \cong \C\e \oplus \C\ed$, we have $\dim_\C V^1 = \dim_\C V^2 = m$. Conversely, given two complex subspaces $W^1, W^2 \subset \C^n$ of dimension $m$, the subspace $W^1\e \oplus W^2\ed \subset \BC^n$ is a free $\BC$-submodule of rank $m$. This bijection is continuous in both directions, hence gives the stated homeomorphism.
\end{proof}

Taking the direct limit as $n \to \infty$ yields the infinite bicomplex Grassmannian
\begin{equation*}
    \Gr_m(\BC^\infty) \cong \Gr_m(\C^\infty) \times \Gr_m(\C^\infty).
\end{equation*}

\subsection{The tautological bicomplex fiber bundle}

Over the finite Grassmannian $\Gr_m(\BC^n)$, we have a canonical bundle.

\begin{defn}
    The \emph{tautological bicomplex fiber bundle} $\gamma_{\BC}^{m,n}$ is the subbundle of the bicomplex trivial bundle $\Gr_m(\BC^n) \times \BC^n$ defined by
    \begin{equation*}
        \gamma_{\BC}^{m,n} = \bigl\{ (V, v) \in \Gr_m(\BC^n) \times \BC^n \mid v \in V \bigr\}.
    \end{equation*}
    Its projection is $\pi_{\BC}^{m,n} : \gamma_{\BC}^{m,n} \to \Gr_m(\BC^n)$, $(V,v) \mapsto V$.
\end{defn}

Using the idempotent decomposition of the Grassmannian, the bicomplex tautological fiber bundle decomposes as
\begin{equation*}
    \gamma_{\BC}^{m,n} = \gamma^{m,n}_\C\e \oplus \gamma^{m,n}_\C\ed,
\end{equation*}
where $\gamma^{m,n}_\C$ denotes the usual complex tautological vector bundle over $\Gr_m(\C^n)$, and the projection is
\begin{equation*}
    \pi_{\BC}^{m,n} = (\pi_{\C}^{m,n}, \pi_{\C}^{m,n}).
\end{equation*}

Passing to the limit $n \to \infty$, we obtain the infinite tautological bundle $\gamma_{\BC}^m \to \Gr_m(\BC^\infty)$.

\subsection{The classification theorem}

We are now ready to state the main theorem of this section. Recall that we have denoted by $\Bun$ the set of isomorphism classes of rank-$m$ bicomplex fiber bundles over $B$.

\begin{thm}\label{thm:classification}
    Let $B$ be a paracompact Hausdorff space. There is a bijection
    \begin{equation*}
        [B, \Gr_m(\BC^\infty)] \to \Bun,
    \end{equation*}
    given by
    \begin{equation}
        [f] \mapsto f^*(\gamma_{\BC}^m).
    \end{equation}
\end{thm}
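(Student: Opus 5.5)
The plan is to reduce the statement to the classical classification theorem for complex vector bundles, applied separately to each idempotent component. By the homeomorphism $\Gr_m(\BC^\infty)\cong\Gr_m(\C^\infty)\times\Gr_m(\C^\infty)$, a map $f\colon B\to\Gr_m(\BC^\infty)$ is the same as a pair $(f^1,f^2)$ of maps into $\Gr_m(\C^\infty)$. Homotopy classes of maps into a product split as well, giving a bijection
\begin{equation*}
[B,\Gr_m(\BC^\infty)]\cong[B,\Gr_m(\C^\infty)]\times[B,\Gr_m(\C^\infty)].
\end{equation*}
On the bundle side, Proposition~\ref{prop:idempotent_bundles} and Remark~\ref{remark:idptbundles} give a bijection
\begin{equation*}
\Bun[m]\to\operatorname{Vect}^m_\C(B)\times\operatorname{Vect}^m_\C(B),\qquad [E]\mapsto([E^1],[E^2]).
\end{equation*}
It is well defined because the idempotent components are canonical up to isomorphism. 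It is injective because bicomplex isomorphism is equivalent to isomorphism of both components. It is surjective by the construction in Example~\ref{ex: universal}.

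Next I check that the map $[f]\mapsto f^*(\gamma^m_{\BC})$ corresponds, under these two identifications, to the product of the classical maps $[f^i]\mapsto (f^i)^*\gamma^m_\C$. Write $\pi_1,\pi_2$ for the two projections of $\Gr_m(\C^\infty)\times\Gr_m(\C^\infty)$. The tautological bundle decomposes as $\gamma^m_{\BC}=(\pi_1^*\gamma^m_\C)\e\oplus(\pi_2^*\gamma^m_\C)\ed$; this is the meaning of the decomposition $\gamma^{m,n}_\C\e\oplus\gamma^{m,n}_\C\ed$ stated earlier, passed to the limit. Pullbacks commute with the idempotent decomposition, so
\begin{equation*}
f^*\gamma^m_{\BC}=(f^{1})^*\gamma^m_\C\,\e\oplus(f^{2})^*\gamma^m_\C\,\ed .
\end{equation*}
Hence the diagram commutes.

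The classical theorem for paracompact Hausdorff $B$ says that each map $[B,\Gr_m(\C^\infty)]\to\operatorname{Vect}^m_\C(B)$ is a bijection. A product of bijections is a bijection, and composing with the two identifications above yields the theorem. As a by-product, the map is well defined on homotopy classes, which for paracompact spaces also follows from homotopy invariance of pullbacks applied componentwise; Corollary~\ref{cor:homotopy_compact} covers only compact $B$, so I rely on the classical paracompact statement instead.

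The main obstacle is topological. I must verify that the homeomorphism of finite Grassmannians passes to the direct limit compatibly, i.e.\ that $\varinjlim(\Gr_m(\C^n)\times\Gr_m(\C^n))$ with the colimit topology agrees with $\Gr_m(\C^\infty)\times\Gr_m(\C^\infty)$ with the product topology. I would handle this by noting that both are CW complexes, with the product of two countable CW complexes again a CW complex whose topology is the colimit of the finite stages. If that subtlety is troublesome, the product can be given the compactly generated topology, which does not change homotopy classes of maps out of $B$ in the relevant sense. Since only the homotopy type matters for $[B,-]$, it would also suffice to observe that the natural continuous bijection is a homotopy equivalence.
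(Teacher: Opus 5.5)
Your proof is correct, but it takes a different route from the paper.

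\textbf{The paper's route.} The paper imitates the classical Atiyah-style argument inside the bicomplex category.
\begin{enumerate}
\item For surjectivity, it chooses a bicomplex epimorphism $\phi\colon B\times\BC^N\to E$. It then builds the classifying map $x\mapsto\ker(\phi_x)^{\perp}$ from the bicomplex inner product (Lemma~\ref{lem:classifying_map}).
\item For injectivity, it appeals to the classical homotopy argument, or alternatively to the connectedness of $\GL(N,\BC)$.
\item The componentwise splitting $\iota_\phi=(\iota_{\phi^1},\iota_{\phi^2})$ appears only as a remark after the proof.
\end{enumerate}

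\textbf{Your route.} You make that splitting the entire proof. You identify both $[B,\Gr_m(\BC^\infty)]$ and the set of rank-$m$ bicomplex bundles with products of their complex counterparts. You then check that the pullback map becomes the product of the two classical maps, and import the classical theorem. Your reading of the tautological bundle as $(\pi_1^*\gamma^m_\C)\e\oplus(\pi_2^*\gamma^m_\C)\ed$ is the correct interpretation of the paper's shorthand.

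\textbf{What your route buys.} It is shorter, and it actually covers the stated generality of paracompact Hausdorff $B$. The paper gets well-definedness from Corollary~\ref{cor:homotopy_compact}, and it restricts images to a finite Grassmannian ``by compactness''. Both steps apply only to compact $B$. You inherit the paracompact statement directly from the classical theorem. You also settle the colimit-versus-product topology question that the paper passes over, and your CW argument (both factors have countably many cells) is right.

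\textbf{What the paper's route buys.} It gives an explicit, intrinsically bicomplex construction of a classifying map from an epimorphism, without passing through the complex theory. In this setting, however, the idempotent splitting makes that construction equivalent to yours.
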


Before giving the proof, we recall the necessary ingredients.

First, by Corollary~\ref{cor:homotopy_compact}, homotopic maps $f_0, f_1: B \to \Gr_m(\BC^\infty)$ induce isomorphic pullback bundles. Hence the map $[f] \mapsto f^*(\gamma_{\BC}^m)$ is well-defined.

Second, by Corollary~\ref{cor:epimorphism}, if $B$ is compact, every bicomplex bundle $E \to B$ admits an epimorphism
\begin{equation*}
    \phi: B \times \BC^N \to E
\end{equation*}
for some integer $N$. In the general paracompact case, the same result holds by using a partition of unity argument (see \cite[Proposition~1.4.15]{Ati1967}).

Given such an epimorphism, for each $x \in B$, the kernel $\ker(\phi_x)$ is a $\BC$-submodule of $\BC^N$ of rank $N-m$. Thus we obtain a map
\begin{align*}
    \iota_\phi: B &\to \Gr_{N-m}(\BC^N) \subset \Gr_{N-m}(\BC^\infty),
    \\ x &\mapsto \ker(\phi_x).
\end{align*}

Let $\phi:B\times\mathbb{B}\mathbb{C}^{N}\to E$ be an epimorphism of bicomplex bundles.
For each $x\in B$, let $\ker(\phi_{x})^{\perp}\subset\mathbb{B}\mathbb{C}^{N}$ denote the
orthogonal complement of\/ $\ker(\phi_{x})$ with respect to the standard bicomplex
inner product of Definition~\ref{def:inner-product}.
Then
\begin{lem}\label{lem:classifying_map}
The map
\begin{gather*}
\iota_{\phi}:B\to\Gr_{m}(\BC^{N}),\\
x\mapsto\ker(\phi_{x})^{\perp},    
\end{gather*}
is well defined and continuous.  Moreover, there is a natural isomorphism
\[
\iota_{\phi}^{*}(\gamma_{\mathbb{B}\mathbb{C}}^{m,N})\cong E.
\]
\end{lem}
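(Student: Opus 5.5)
The plan is to reduce everything to the classical complex statement through the idempotent decomposition. By Proposition~\ref{prop:idempotent_bundles} we write $E\cong E^1\e\oplus E^2\ed$. The trivial bundle splits as $B\times\BC^N=(B\times\C^N)\e\oplus(B\times\C^N)\ed$. Since $\phi$ is a bicomplex homomorphism, it decomposes as $\phi=\phi^1\e+\phi^2\ed$ with complex bundle maps $\phi^j:B\times\C^N\to E^j$. An element $v^1\e+v^2\ed$ of $E_x$ has preimage componentwise, so $\phi$ is surjective if and only if both $\phi^1$ and $\phi^2$ are surjective. Thus each $\phi^j$ is an epimorphism of complex vector bundles, and $\ker(\phi_x)=\ker(\phi^1_x)\e\oplus\ker(\phi^2_x)\ed$.

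\textbf{Step 1 (well-definedness).} By Definition~\ref{def:inner-product} the bicomplex inner product is the componentwise Hermitian product, so $u\perp v$ in $\BC^N$ means $u^1\perp v^1$ and $u^2\perp v^2$. Hence
\[
\ker(\phi_x)^\perp=\ker(\phi^1_x)^\perp\e\oplus\ker(\phi^2_x)^\perp\ed .
\]
Each $\ker(\phi^j_x)^\perp$ is a complex subspace of $\C^N$ of dimension $N-(N-m)=m$, because $\phi^j_x$ is onto the rank-$m$ space $E^j_x$. By the proposition identifying $\Gr_m(\BC^N)$ with $\Gr_m(\C^N)\times\Gr_m(\C^N)$, this is a free $\BC$-submodule of rank $m$. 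So $\iota_\phi(x)\in\Gr_m(\BC^N)$, and under that homeomorphism $\iota_\phi=(\iota_{\phi^1},\iota_{\phi^2})$, where $\iota_{\phi^j}(x)=\ker(\phi^j_x)^\perp$ is the classical complex map.

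\textbf{Step 2 (continuity).} Continuity of $\iota_\phi$ then follows from continuity of each $\iota_{\phi^j}$, which is the classical fact used in \cite[Chapter~1]{Ati1967}. If I want to verify it directly, I would work locally: trivialize $E^j$ over $U$, so that $\phi^j$ is a continuous family of surjective $m\times N$ matrices $M(x)$. Then $\ker(\phi^j_x)^\perp$ is the row space of $\overline{M(x)}$, i.e. the span of the $m$ columns of $M(x)^*$. These columns form a continuous family of linearly independent vectors, that is, a continuous map into the Stiefel manifold. Composing with the quotient map to the Grassmannian gives continuity. This local-to-global continuity is the only genuinely topological point, and I expect it to be the main (though mild) obstacle. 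It needs some care with the quotient topology in the infinite case, but since $N$ is finite here it is straightforward.

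\textbf{Step 3 (the isomorphism).} Define $\Psi:\iota_\phi^*(\gamma^{m,N}_{\BC})\to E$ by $(x,v)\mapsto\phi_x(v)$ for $v\in\ker(\phi_x)^\perp$. This map is continuous, since it is the restriction of $\phi$ to a subbundle of $B\times\BC^N$. It is $\BC$-linear on fibers because $\phi_x$ is. It is bijective on each fiber: on the $\e$-component it is $\phi^1_x$ restricted to $\ker(\phi^1_x)^\perp$, which is an injective map between $m$-dimensional spaces, and the same holds for the $\ed$-component. A continuous bundle map that is a fiberwise isomorphism is an isomorphism, since the inverse is continuous by the usual local-matrix argument. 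Alternatively, by Remark~\ref{remark:idptbundles} it suffices to note that $\Psi=\Psi^1\e+\Psi^2\ed$ with each $\Psi^j$ the classical complex isomorphism $\iota_{\phi^j}^*\gamma^{m,N}_\C\cong E^j$. Finally, $\Psi$ is natural, because it is defined solely in terms of $\phi$ and the inner product, with no choices involved.
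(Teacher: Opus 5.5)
Your proof is correct and takes the same route as the paper: the isomorphism is the same restriction $\phi_x|_{\ker(\phi_x)^\perp}$, and your componentwise reduction is the decomposition $\iota_\phi=(\iota_{\phi^1},\iota_{\phi^2})$ that the paper records in the remark after the classification theorem. Your Step 2, together with your observation that a continuous fiberwise isomorphism is a bundle isomorphism, supplies the continuity arguments that the paper's proof leaves implicit.
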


\begin{proof}
By the bicomplex Gram--Schmidt process (Subsection~\ref{subsec:gram-schmidt}),
$\BC^{N}=\ker(\phi_{x})\oplus\ker(\phi_{x})^{\perp}$ and the bicomplex rank of $\ker(\phi_{x})^{\perp}$ is $m$ for every $x\in B$. 
By definition,
\begin{equation*}
\iota_{\phi}^{*}(\gamma_{\mathbb{B}\mathbb{C}}^{m,N})
=\bigl\{(x,v)\in B\times\mathbb{B}\mathbb{C}^{N}\mid v\in\ker(\phi_{x})^{\perp}\bigr\}.
\end{equation*}
Since $\phi_{x}$ is surjective and $\ker(\phi_{x})\cap\ker(\phi_{x})^{\perp}=\{0\}$,
the restriction $\phi_{x}|_{\ker(\phi_{x})^{\perp}}:\ker(\phi_{x})^{\perp}\to E_{x}$
is an isomorphism of $\mathbb{B}\mathbb{C}$-modules for each $x$.  These fiberwise
isomorphisms assemble into a global bundle isomorphism
$\iota_{\phi}^{*}(\gamma_{\mathbb{B}\mathbb{C}}^{m,N})\cong E$.
\end{proof}

\begin{proof}[Proof of Theorem~\ref{thm:classification}]
    We have already shown that the map $[f] \mapsto f^*(\gamma_{\BC}^m)$ is well-defined.

To prove surjectivity, let $E\to B$ be a bicomplex fiber bundle of rank $m$. Choose an
epimorphism $\phi:B\times\BC^N\to E$ as above, and consider the induced map
\begin{align*}
\iota_{\phi}:B&\to\Gr_m(\BC^{N})
\subset\Gr_m(\BC^{\infty}),\\
x&\mapsto\ker(\phi_{x})^{\perp}.
\end{align*}
By Lemma~\ref{lem:classifying_map}, $\iota_{\phi}^{*}(\gamma_{\mathbb{B}\mathbb{C}}^{m})\cong E$.

To prove injectivity, suppose $f, g: B \to \Gr_m(\BC^\infty)$ satisfy $f^*(\gamma_{\BC}^m) \cong g^*(\gamma_{\BC}^m)$. By compactness, we may assume their images lie in a finite Grassmannian $\Gr_m(\BC^N)$. The isomorphism of the pullback bundles gives an isomorphism of the associated epimorphisms from the trivial bundle to the pullback. Standard arguments (see \cite[Theorem~1.4.15]{Ati1967}) show that the maps $f$ and $g$ are homotopic. Alternatively, one can use the fact that the space of epimorphisms $B \times \BC^N \to E$ is path-connected, since $\GL(N,\BC)$ is connected (indeed, $\GL(N,\BC) \cong \GL(N,\C)\e \oplus \GL(N,\C)\ed$, and $\GL(N,\C)$ is connected). Hence the induced maps to the Grassmannian are homotopic.
\end{proof}

\begin{remark}[Idempotent decomposition of the classifying map]
If $E=E^{1}\e\oplus E^{2}\ed$, then any epimorphism
$\phi:B\times\BC^{N}\to E$ decomposes as
$\phi=\phi^{1}\e+\phi^{2}\ed$, where
$\phi^{1}:B\times\C^{N}\to E^{1}$ and
$\phi^{2}:B\times\C^{N}\to E^{2}$ are complex epimorphisms.
Because the bicomplex inner product is componentwise
(Definition~\textup{\ref{def:inner-product}}), the orthogonal complement
satisfies
\begin{equation*}
\ker(\phi_{x})^{\perp}
=\ker(\phi^{1}_{x})^{\perp}\mathbf{e}\oplus\ker(\phi^{2}_{x})^{\perp}\mathbf{e}^{\dagger}.
\end{equation*}
Consequently, the classifying map decomposes as
\begin{equation*}
\iota_{\phi}
=(\iota_{\phi^{1}},\iota_{\phi^{2}})
:B\to\operatorname{Gr}_{m}(\mathbb{C}^{N})\times\operatorname{Gr}_{m}(\mathbb{C}^{N}),
\end{equation*}
which is precisely the idempotent decomposition of\/
$\operatorname{Gr}_{m}(\mathbb{B}\mathbb{C}^{N})$.
\end{remark}

\section{Leray--Hirsch theorem}\label{sec:Leray-hirsch}
In this section, we introduce the analogue of Chern classes $\wb_k(E)$ of a bicomplex fiber bundle $E$. We show that these characteristic classes are not determined by the usual complex Chern classes of the underlying complex bundles $E_\C$, but they satisfy similar properties. As in the complex case, the construction is based on the Leray--Hirsch theorem applied to the bicomplex projective bundle.

\subsection{Bicomplex projective spaces and their cohomology}

Recall that $\Gr_1(\C^n) \cong \CP^{n-1}$. By the idempotent decomposition of the Grassmannian, we define the bicomplex projective space as
\begin{equation*}
    \BCP^{n-1} = \Gr_1(\BC^n) \cong \Gr_1(\C^n)\times \Gr_1(\C^n)\cong\CP^{n-1} \times \CP^{n-1}.
\end{equation*}

Passing to the limit $n \to \infty$, we have
\begin{equation*}
    \BCP^\infty \cong \CP^\infty \times \CP^\infty.
\end{equation*}

Let $p_1, p_2: \BCP^\infty \to \CP^\infty$ be the projections onto the two factors. Let $a \in H^2(\C P^\infty;\Z)$ be the generator corresponding to the orientation of $\CP^1 \cong \mathbb{S}^2$. Consider
\begin{equation*}
    x = p_1^*(a) \quad \text{and}\quad y = p_2^*(a).
\end{equation*}
The cohomology of the bicomplex projective space is computed via the Künneth formula.

\begin{prop}\label{prop:cohomology_bcp}
    The integral cohomology of $\BCP^{n-1}$ is given by
    \begin{equation*}
        H^*(\BCP^{n-1};\Z) \cong \Z[x,y] / \langle x^{n}, y^{n} \rangle.
    \end{equation*}
    Consequently,
    \begin{equation*}
        H^*(\BCP^\infty;\Z) \cong \Z[x,y].
    \end{equation*}
\end{prop}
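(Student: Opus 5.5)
We use the identification $\BCP^{n-1}\cong\CP^{n-1}\times\CP^{n-1}$ established above via the idempotent decomposition of $\Gr_1(\BC^n)$, and compute its cohomology with the Künneth formula. The input is the classical computation $H^*(\CP^{n-1};\Z)\cong\Z[a]/\langle a^{n}\rangle$, with $a$ the restriction of the generator of $H^2(\CP^\infty;\Z)$. This group is free abelian and finitely generated in each degree, since $\CP^{n-1}$ has a CW structure with one cell in each even dimension $0,2,\dots,2n-2$.

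First we apply the Künneth theorem for the product of the two finite CW complexes. Because both factors have torsion-free cohomology, the $\Tor$ term vanishes. The cross product then gives an isomorphism of graded rings
\begin{equation*}
H^*(\CP^{n-1};\Z)\otimes_\Z H^*(\CP^{n-1};\Z)\xrightarrow{\ \cong\ } H^*(\CP^{n-1}\times\CP^{n-1};\Z),
\end{equation*}
where the tensor product carries the graded-commutative product; no signs arise because all classes have even degree. Under this map $a\otimes 1\mapsto p_1^*(a)=x$ and $1\otimes a\mapsto p_2^*(a)=y$, using the finite-level projections, or equivalently restricting the $x,y$ defined on $\BCP^\infty$. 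Since
\begin{equation*}
\Z[a]/\langle a^n\rangle\otimes\Z[a]/\langle a^n\rangle\cong\Z[x,y]/\langle x^n,y^n\rangle,
\end{equation*}
the finite statement follows. It remains to check that the homeomorphism $\Gr_1(\BC^n)\cong\CP^{n-1}\times\CP^{n-1}$ is compatible with the projections $p_1,p_2$, which holds by construction.

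For the infinite case, $\BCP^\infty\cong\CP^\infty\times\CP^\infty$. We can either apply Künneth directly, since $\CP^\infty$ is a CW complex of finite type with free cohomology $\Z[a]$, or pass to the limit over $n$. Taking the limit, the restriction maps $H^*(\BCP^{n};\Z)\to H^*(\BCP^{n-1};\Z)$ are the obvious surjections. The Milnor sequence therefore has vanishing $\varprojlim^1$, and in each fixed degree $2k$ the groups stabilize once $n>k$. This gives $H^*(\BCP^\infty;\Z)\cong\Z[x,y]$, with no relations in any degree.

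The main point needing care is the topological one. The Künneth isomorphism must be applied in the product topology, and for infinite complexes the product CW topology agrees with the product topology here because $\CP^\infty$ is countable and locally finite at each stage; equivalently, the degreewise stabilization argument avoids this issue. The algebra is otherwise routine.
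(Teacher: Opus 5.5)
Your argument is correct and matches the paper's proof: identify $\BCP^{n-1}\cong\CP^{n-1}\times\CP^{n-1}$, apply the Künneth formula (the $\Tor$ terms vanish because $H^*(\CP^{n-1};\Z)$ is free in each degree), and read off $\Z[x,y]/\langle x^{n},y^{n}\rangle$ from the tensor product of two truncated polynomial rings. The paper only asserts the $\BCP^\infty$ case as a consequence, so your stabilization argument via the Milnor sequence (or direct Künneth for the finite-type complex $\CP^\infty$) usefully fills that in; the only point-set input it needs is that $\CP^\infty$ has countably many cells, not local finiteness, which $\CP^\infty$ does not have.
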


\begin{proof}
    Since $\BCP^{n-1} \cong \CP^{n-1} \times \CP^{n-1}$, the Künneth formula gives
    \begin{equation*}
        H^*(\BCP^{n-1};\Z) \cong H^*(\CP^{n-1};\Z) \otimes H^*(\CP^{n-1};\Z).
    \end{equation*}
    The cohomology of complex projective space is $H^*(\CP^{n-1};\Z) \cong \Z[x]/\langle x^{n}\rangle$, and similarly for $y$. The tensor product yields the stated ring. The vanishing of the $\Tor$-terms follows from the fact that each $H^k(\CP^{n-1};\Z)$ is a free $\Z$-module.
\end{proof}

Let $\pi: E \to B$ be a bicomplex fiber bundle of rank $n$. Consider the \emph{bicomplex projective bundle}
\begin{equation*}
    \P\pi: \P E \to B,
\end{equation*}
whose fiber over $x \in B$ is $\P(E_x) \cong \BCP^{n-1}$. By the idempotent decomposition, we have
\begin{equation*}
    \P\pi = (\P\pi^1, \P\pi^2): \P E^1\e \oplus \P E^2\ed \to B.
\end{equation*}

\begin{remark}\label{rem:projective_abuse}
    We are abusing notation with $\P$. Note that the bicomplex projective bundle differs from the complex projective bundle of the underlying complex vector bundle. Indeed, if $\pi:E \to B$ is viewed as a complex vector bundle of rank $2n$ (with the underlying complex structure) that we shall denote by $\pi:E_\C\to B$, then its complex projective bundle $\P E_{\C} \to B$ has fibers isomorphic to $\CP^{2n-1}$. In contrast, the bicomplex projective bundle $\P E$ has fibers isomorphic to
    \begin{equation*}
        \BCP^{n-1} \cong \CP^{n-1} \times \CP^{n-1},
    \end{equation*}
    which is a product of two projective spaces of dimension $n-1$, not a single projective space of dimension $2n-1$. This will be crucial in our definition of characteristic classes for the bicomplex case.
\end{remark}

The pullback bundle $(\P\pi)^*(E)$ contains a natural bicomplex subbundle of rank 1, the bicomplex tautological line bundle
\begin{equation*}
    L(\pi) = \{ (v, \ell) \in E \times \P E \mid v \in \ell \}.
\end{equation*}
Any bicomplex line $\ell \in \BCP^{n-1}$ decomposes as $\ell = \ell^1\e + \ell^2\ed$ with $\ell^1, \ell^2 \in \CP^{n-1}$. Hence $L(\pi)$ decomposes as
\begin{equation*}
    L(\pi) = L^1(\pi)\e \oplus L^2(\pi)\ed,
\end{equation*}
where $L^1(\pi)$ and $L^2(\pi)$ are complex line bundles over $\P E$.

\subsection{The Leray--Hirsch theorem for bicomplex fiber bundles}

By Theorem~\ref{thm:classification}, isomorphism classes of bicomplex line bundles $L \to B$ are in bijection with homotopy classes of maps
\begin{equation*}
    f: B \to \BCP^\infty.
\end{equation*}

\begin{defn}
    The \emph{first bicomplex characteristic class} of a line bundle $L \to B$ is defined as
    \begin{equation}
        \wb_1(L) = f^*(x + y) = f^*(x) + f^*(y) \in H^2(B;\Z),
    \end{equation}
    where $f: B \to \BCP^\infty$ is a classifying map for $L$.
\end{defn}

\begin{remark}
    The class $x + y$ in $H^2(\BCP^\infty;\Z)$ is characterized by the commutativity of the factors and the canonical normalization. In terms of complex Chern classes, if $L = L^{1}\e \oplus L^{2}\ed$ is viewed as a complex vector bundle of rank $2$ (via the underlying complex structure), then the complex first Chern class satisfies
    \begin{align*}
        c_1(L^{1}) + c_{1}(L^{2}) &=
        f^*(p_1^*(a)) + f^*(p_2^*(a))\\
        &=f^*(x) + f^*(y)\\
        &=\wb_1(L).\\
    \end{align*}
     However, the complex second Chern class $c_2(L)$ is not captured by any bicomplex Chern class.
\end{remark}

Consider $f:\P E\to \BCP^\infty$, the classifying map given by Theorem~\ref{thm:classification}. Let $\xi_x = f^*(x)$ and $\xi_y = f^*(y)$. The following is the bicomplex analogue of the classical Leray--Hirsch theorem.

\begin{thm}[Leray--Hirsch for bicomplex fiber bundles]\label{thm:leray_hirsch}
    Let $\pi:E \to B$ be a bicomplex fiber bundle of rank $n$. The cohomology ring $H^*(\P E;\Z)$ is a free $H^*(B;\Z)$-module with basis
    \begin{equation}\label{eq:basis}
        \mathcal{B} = \{ \xi_x^i \xi_y^j \mid 0 \le i \le n-1,\ 0 \le j \le n-1 \}.
    \end{equation}
\end{thm}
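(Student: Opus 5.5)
The plan is to deduce the statement from the classical Leray--Hirsch theorem, applied to the fiber bundle $\P\pi:\P E\to B$ with fiber $\BCP^{n-1}\cong\CP^{n-1}\times\CP^{n-1}$. That theorem says $H^*(\P E;\Z)$ is a free $H^*(B;\Z)$-module on a set of global classes, provided those classes restrict, on each fiber, to a $\Z$-basis of the fiber cohomology. That fiber cohomology is free and of finite type, and $B$ is paracompact, with the usual CW or finite-type hypotheses if needed. So the work is to produce global classes $\xi_x,\xi_y\in H^2(\P E;\Z)$ whose monomials $\xi_x^i\xi_y^j$, $0\le i,j\le n-1$, restrict fiberwise to the basis $\{x^iy^j\}$ of Proposition~\ref{prop:cohomology_bcp}.

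First, I identify $\xi_x,\xi_y$ concretely. The classifying map $f:\P E\to\BCP^\infty$ is the one for the tautological bicomplex line bundle $L(\pi)=L^1(\pi)\e\oplus L^2(\pi)\ed$. By the remark following Theorem~\ref{thm:classification}, $f$ decomposes as $(f^1,f^2)$ with $f^k$ classifying $L^k(\pi)$. Hence $\xi_x=f^*p_1^*a=c_1(L^1(\pi))$ and $\xi_y=c_1(L^2(\pi))$, up to the sign convention fixed by the choice of $a$.

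Second, I restrict to a fiber. Over $b\in B$ the inclusion $j_b:\P(E_b)\cong\CP^{n-1}\times\CP^{n-1}\hookrightarrow\P E$ pulls $L(\pi)$ back to the tautological bicomplex line bundle over $\BCP^{n-1}$. In idempotent components this is $q_1^*\gamma^{1,n}_\C\e\oplus q_2^*\gamma^{1,n}_\C\ed$, where $q_1,q_2$ are the factor projections. So $j_b\circ f$ is homotopic to the standard inclusion $\CP^{n-1}\times\CP^{n-1}\hookrightarrow\CP^\infty\times\CP^\infty$. By naturality, $j_b^*\xi_x=x$ and $j_b^*\xi_y=y$, viewed as the generators in $\Z[x,y]/\langle x^n,y^n\rangle$. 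Since $j_b^*$ is a ring map, $j_b^*(\xi_x^i\xi_y^j)=x^iy^j$, and these monomials with $0\le i,j\le n-1$ form a $\Z$-basis of $H^*(\BCP^{n-1};\Z)$ by Proposition~\ref{prop:cohomology_bcp}.

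Third, I apply the classical Leray--Hirsch theorem to conclude that the map $H^*(B;\Z)\otimes\Z\langle\mathcal B\rangle\to H^*(\P E;\Z)$, $\beta\otimes\xi_x^i\xi_y^j\mapsto(\P\pi)^*\beta\cdot\xi_x^i\xi_y^j$, is an isomorphism. As an alternative route, one can note that $\P E=\P E^1\times_B\P E^2$ is an iterated projective bundle. Then the complex Leray--Hirsch theorem is applied twice: first to $\P E^1\to B$ with basis $\xi_x^i$, then to the pullback of $\P E^2$ over $\P E^1$ with basis $\xi_y^j$. This yields the same basis.

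The main obstacle is the fiber-restriction step. There one must check carefully that the global classifying map restricted to a fiber really is the standard inclusion, including orientation and sign conventions for $a$. One must also check that the abstract fiber homeomorphism $\P(E_b)\cong\BCP^{n-1}$ is compatible with the idempotent splitting, which holds because it comes from a $\GL(n,\BC)=\GL(n,\C)\e\oplus\GL(n,\C)\ed$ trivialization. I would handle this through the identification $\xi_x=c_1(L^1(\pi))$, which reduces everything to the well-known complex statement.
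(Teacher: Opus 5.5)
Your proposal is correct and follows essentially the paper's route: both are the Leray--Hirsch argument for $\P\pi:\P E\to B$ with fiber $\CP^{n-1}\times\CP^{n-1}$. The paper's sketch reruns the K\"unneth-plus-Mayer--Vietoris gluing over a trivializing cover, whereas you cite the general theorem and supply the fiber-restriction check the sketch leaves implicit: up to sign, $\xi_x=c_1(L^1(\pi))$ and $\xi_y=c_1(L^2(\pi))$ restrict to $x,y$ on each fiber (your $j_b\circ f$ should read $f\circ j_b$), and your alternative via $\P E=\P E^1\times_B\P E^2$ with two applications of the complex theorem is a precise form of the paper's ``reducing to the complex case.''
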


\begin{proof}[Sketch of proof]
    For the trivial bundle $E = B \times \BC^n$, we have $\mathbb{P}E \cong B \times \BCP^{n-1}$. By the Künneth formula and Proposition~\ref{prop:cohomology_bcp},
    \begin{equation*}
        H^*(\mathbb{P}E;\Z) \cong H^*(B;\Z) \otimes \Z[x,y]/\langle x^n, y^n\rangle,
    \end{equation*}
    which is exactly the statement with basis $\{x^i y^j \mid 0 \le i,j \le n-1\}$.

    For a general bicomplex bundle, the proof follows the classical argument: one covers $B$ by trivializing open sets, uses the Mayer--Vietoris sequence, and compares the cohomology rings. The idempotent decomposition ensures that the argument decomposes componentwise, reducing to the complex case.
\end{proof}

The following result is essential for computations.

\begin{prop}[Splitting principle]\label{prop:splitting}
    Let $\pi:E \to B$ be a bicomplex fiber bundle. Then there exists a continuous map $f: P \to B$ such that
    \begin{enumerate}
        \item $f^*E$ is a direct sum of bicomplex line bundles on $P$;
        \item $f^*: H^*(B;\Z) \to H^*(P;\Z)$ is injective.
    \end{enumerate}
    The map $f$ is called a \emph{splitting map}.
\end{prop}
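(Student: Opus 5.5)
We argue by induction on the rank $n$ of $E$, iterating the bicomplex projective bundle construction as in the classical argument. For $n=1$ there is nothing to prove: take $P=B$ and $f=\mathrm{id}$. For $n\geq 2$, let $\P\pi:\P E\to B$ be the bicomplex projective bundle. By Theorem~\ref{thm:leray_hirsch}, $H^*(\P E;\Z)$ is a free $H^*(B;\Z)$-module with basis $\mathcal{B}$ from \eqref{eq:basis}. Since $\xi_x^0\xi_y^0=1$ belongs to $\mathcal{B}$, the module structure map $(\P\pi)^*:H^*(B;\Z)\to H^*(\P E;\Z)$, $\alpha\mapsto \alpha\cdot 1$, is injective.

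Next we split off a line. The pullback $(\P\pi)^*E$ contains the bicomplex tautological line bundle $L(\pi)=L^1(\pi)\e\oplus L^2(\pi)\ed$. Equip $E$ with a bicomplex Hermitian metric, componentwise on $E^1$ and $E^2$ via partitions of unity, which exist because $B$ is paracompact. The orthogonal complement $Q=L(\pi)^\perp$ inside $(\P\pi)^*E$ is then $Q^1\e\oplus Q^2\ed$, where $Q^i=(L^i(\pi))^{\perp}\subset(\P\pi^i)^*E^i$. Each $Q^i$ has complex rank $n-1$, so $Q$ is a bicomplex bundle of rank $n-1$ by Example~\ref{ex: universal}, and
\begin{equation*}
(\P\pi)^*E\cong L(\pi)\oplus Q .
\end{equation*}
This step uses the componentwise bicomplex Gram--Schmidt process of Subsection~\ref{subsec:gram-schmidt}, together with the fact that a bicomplex line $\ell=\ell^1\e+\ell^2\ed$ is a free rank-one submodule. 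Its complement is therefore again free of rank $n-1$, and not merely a module containing null-cone vectors.

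For the induction step, apply the inductive hypothesis to $Q\to\P E$. This gives $g:P\to\P E$ such that $g^*Q$ is a sum of bicomplex line bundles and $g^*$ is injective in cohomology. Set $f=\P\pi\circ g$. Pullback commutes with direct sums (see the bundle operations listed earlier), so
\begin{equation*}
f^*E\cong g^*L(\pi)\oplus g^*Q
\end{equation*}
is a sum of $n$ bicomplex line bundles. Moreover, $f^*=g^*\circ(\P\pi)^*$ is a composition of injective maps and hence injective.

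The main obstacle is the injectivity of $(\P\pi)^*$ for a general paracompact base. Theorem~\ref{thm:leray_hirsch} is only sketched, and the classical Leray--Hirsch argument needs finite good covers or a limit argument. If needed, we would instead reduce to the complex case. By the idempotent decomposition, $\P E$ is the fiber product $\P E^1\times_B\P E^2$, so $\P\pi$ factors as $\P E^1\times_B \P E^2\to \P E^1\to B$, and each factor is a complex projective bundle. Injectivity for each factor then follows from the classical Leray--Hirsch theorem, whose classes $c_1$ of the tautological bundle restrict to generators on fibers.
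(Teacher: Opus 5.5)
Your proposal is correct and is essentially the paper's proof: induction on the rank, splitting $(\P\pi)^*E=L(\pi)\oplus Q$ by taking componentwise orthogonal complements of $L^1(\pi)$ and $L^2(\pi)$, and getting injectivity of $f^*=g^*\circ(\P\pi)^*$ from Leray--Hirsch together with the inductive hypothesis. Your fallback is a sound way to avoid relying on the merely sketched bicomplex Leray--Hirsch: factor $\P E=\P E^1\times_B\P E^2\to\P E^1\to B$ and apply the classical theorem twice. One small notational correction: $Q^i$ should sit inside $(\P\pi)^*E^i$ over $\P E$, not inside $(\P\pi^i)^*E^i$.
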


\begin{proof}
    The proof is identical to the complex case. We proceed by induction on the rank $n$. For $n=1$, take $P=B$ and $f=\mathrm{id}$. Assume the result holds for ranks $< n$. Let $\P\pi: \P E \to B$ be the bicomplex projective bundle. Over $\P E$, we have the tautological line subbundle $L(\pi) \subset \pi^*E$. Consider the idempotent decomposition
\begin{equation*}
\pi^*E=\pi^*E^1e\oplus\pi^*E^2e^\dagger.
\end{equation*}
The tautological line bundle decomposes accordingly as
\begin{equation*}
L(\pi)=L^1(\pi)e\oplus L^2(\pi)e^\dagger.
\end{equation*}
Taking the orthogonal complements of $L^1(\pi)$ in $\pi^*E^1$ and of $L^2(\pi)$ in $\pi^*E^2$, we obtain complex subbundles $Q^1$ and $Q^2$. Hence,
\begin{equation*}
Q=Q^1e\oplus Q^2e^\dagger
\end{equation*}
is a bicomplex subbundle, and we obtain the splitting sequence
\begin{equation*}
0\longrightarrow L(\pi)\longrightarrow \pi^*E\longrightarrow Q\longrightarrow 0,
\end{equation*}
Moreover,
\begin{equation*}
\pi^*E=L(\pi)\oplus Q.
\end{equation*}

The bundle $Q$ has rank $n-1$. By the induction hypothesis, there exists a map $g: P' \to \P E$ such that $g^*Q$ splits into line bundles and $g^*$ is injective on cohomology. Let $f = \P\pi \circ g: P' \to B$. Then $f^*E = g^*\pi^*E \cong g^*L(\pi) \oplus g^*Q$, where $g^*L(\pi)$ is a bicomplex line bundle and $g^*Q$ splits by induction. The injectivity of $f^*$ follows from the injectivity of $\P\pi^*$ (Leray--Hirsch) and of $g^*$.
\end{proof}

\subsection{Bicomplex characteristic classes}

Since $\xi_x^n + \xi_y^n$ can be expressed as a linear combination of the basis elements in \eqref{eq:basis} with coefficients in $H^*(B;\Z)$, there is a unique relation of the form
\begin{equation}\label{eq:linear-relation}
    \xi_x^n + \xi_y^n + \sum\limits_{\substack{0\leq i,j\leq n-1\\ i+j\leq n}} (\mathbb{P}\pi)^*(\alpha_{i,j})\xi_x^{i}\xi_y^{j} = 0
\end{equation}
in $H^*(\P E;\Z)$, where $\alpha_{i,j} \in H^{2(n-i-j)}(B;\Z)$.

\begin{defn}\label{def:bicomplex-chiern-classes}
    For $k = 1, \dots, n$, the \emph{$k$-th bicomplex Chern class} of $E$ is defined as
    \begin{equation*}
        \wb_k(E) = \sum_{i+j = n-k} \alpha_{i,j} \in H^{2k}(B;\Z).
    \end{equation*}
    We set $b_0=2$, then the \emph{total bicomplex characteristic class} to be
    \begin{equation*}
        \wb(E) = b_0 + \wb_1(E) + \cdots + \wb_n(E) \in H^*(B;\Z).
    \end{equation*}
\end{defn}

\begin{remark}
One might alternatively try to define the bicomplex Chern classes by imposing the relation in the Leray--Hirsch theorem for $(\xi_x+\xi_y)^n$. Expanding this expression yields
\begin{equation*}
(\xi_x+\xi_y)^n \;=\; \sum_{i=0}^{n}\binom{n}{i}\,\xi_x^{\,n-i}\xi_y^{\,i}.
\end{equation*}
For $1\le i\le n-1$, the cross--terms $\xi_x^{\,n-i}\xi_y^{\,i}$ already belong to the basis 
$\mathcal{B}=\{\xi_x^{p}\xi_y^{q}\mid 0\le p,q\le n-1\}$ 
of $H^{*}(\mathbb{P}E;\Z)$ as an $H^{*}(B;\Z)$--module. Hence they contribute directly to the coefficients of total fibre--degree $n$ in the relation in Leray--Hirch theorem. The sum of all these coefficients is
\begin{equation*}
\sum_{i=0}^{n}\binom{n}{i}=2^{n},
\end{equation*}
which would force $b_0=2^{n}$, a value that depends on the rank $n$ and is incompatible with the idempotent decomposition. Indeed, for a bundle $E=E^{1}\mathbf{e}\oplus E^{2}\ed$ we must have
\begin{equation*}
b_0(E)=c_0(E^{1})+c_0(E^{2})=1+1=2.
\end{equation*}
Therefore, the relation $\xi_x^{n}+\xi_y^{n}$ is the correct choice: its degree--$n$ terms have coefficients $1+1=2$, yielding the topologically consistent normalization $b_0=2$ independent of the rank.
\end{remark}

\begin{remark}
Since $E = E^1\e\oplus E^2\ed$ can be viewed as a complex vector bundle of rank $2n$, its total complex Chern class is
\begin{equation*}
c(E_{\C}) = c(E^1)c(E^2).
\end{equation*}
However, the bicomplex Chern classes $b_k(E)$ differ from the complex ones $c_k(E_{\C})$ for the underlying complex vector bundle $E_\C$. In particular, $\wb_{2n}(E) = 0$ by definition (since $\wb_k$ is defined only for $k \le n$), whereas $c_{2n}(E_{\C})$ may be nontrivial.
\end{remark}

\subsection{Properties of bicomplex characteristic classes}
In this subsection, we prove that bicomplex Chern classes can be obtained from complex Chern classes.  We also note that there is no analogous result for complex whitney formula.

Let 
\begin{equation*}
p_1: \mathbb{P}E \cong \P E^1\e \oplus \P E^2 \ed\to \mathbb{P}E^1
\end{equation*}
and 
\begin{equation*}
p_2: \P E \cong \P E^1\e \oplus \P E^2\ed \to \mathbb{P}E^2
\end{equation*}
be the canonical projections. One checks that
\begin{equation*}\label{eq:xi-complex}
p_1^*(\xi(E^1)) = \xi_x(E) \quad \text{and}\quad p_2^*(\xi(E^2)) = \xi_y(E),
\end{equation*}
where $\xi(E^1)$ and $\xi(E^2)$ are the classes that define the complex Chern classes $c_k(E^1)$ and $c_k(E^2)$, respectively. We have the following 
\begin{thm}\label{prop:bicomplex-class-descomposotion}
    If $E=E^1\e\oplus E^2\ed$, then 
    \begin{equation*}
        \wb_k(E)=c_k(E^1)+c_k(E^2) \in H^{2k}(B;\Z)\quad \text{for $k=0,\cdots,n$},
    \end{equation*}
    where $c_k(E^1)$ and $c_k(E^2)$ are the complex Chern classes.
\end{thm}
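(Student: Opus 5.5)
The plan is to compute the relation \eqref{eq:linear-relation} explicitly. On $\P E$ the classes $\xi_x$ and $\xi_y$ are pulled back from the two complex projective bundles, and each of them already satisfies the classical Chern relation there. Adding the two relations should produce \eqref{eq:linear-relation}, and uniqueness of the coefficients in the Leray--Hirsch basis then lets us read off the $\alpha_{i,j}$.

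First I identify $\P E$ with the fibre product $\P E^1\times_B \P E^2$ over $B$, with projections $p_1,p_2$ as above, so that $\P\pi=\P\pi^1\circ p_1=\P\pi^2\circ p_2$. The tautological line bundle splits as $L(\pi)=L^1(\pi)\e\oplus L^2(\pi)\ed$, where $L^r(\pi)=p_r^*L(\pi^r)$. Its classifying map to $\BCP^\infty\cong\CP^\infty\times\CP^\infty$ is then the pair of classifying maps of $L^1(\pi)$ and $L^2(\pi)$, which is the Remark on the idempotent decomposition of the classifying map. This justifies the identities $\xi_x=p_1^*\xi(E^1)$ and $\xi_y=p_2^*\xi(E^2)$ stated just before the theorem.

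Next, the classical Leray--Hirsch relations in $H^*(\P E^r;\Z)$ are
\[
\xi(E^r)^n+\sum_{k=1}^n (\P\pi^r)^*c_k(E^r)\,\xi(E^r)^{n-k}=0,\qquad r=1,2,
\]
with the sign and normalization convention that the paper uses to define $c_k$. Pulling the first back by $p_1$ and the second by $p_2$ and adding them gives
\[
\xi_x^n+\xi_y^n+\sum_{k=1}^n(\P\pi)^*c_k(E^1)\,\xi_x^{n-k}+\sum_{k=1}^n(\P\pi)^*c_k(E^2)\,\xi_y^{n-k}=0 .
\]
Every monomial appearing here other than $\xi_x^n,\xi_y^n$ lies in the basis $\mathcal B$ of Theorem~\ref{thm:leray_hirsch}. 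Since the coefficients in \eqref{eq:linear-relation} are unique, we get $\alpha_{n-k,0}=c_k(E^1)$ and $\alpha_{0,n-k}=c_k(E^2)$ for $1\le k\le n-1$, and $\alpha_{i,j}=0$ whenever $i,j>0$.

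The obstacle is the case $k=n$. Both $\xi_x^0$ and $\xi_y^0$ equal $1$, so $c_n(E^1)$ and $c_n(E^2)$ both land on the single basis element $\xi_x^0\xi_y^0$. The formula then gives $\alpha_{0,0}=c_n(E^1)+c_n(E^2)$ rather than two separate coefficients, so the index bookkeeping has to be handled with care. Summing $\alpha_{i,j}$ over $i+j=n-k$ gives $\alpha_{n-k,0}+\alpha_{0,n-k}=c_k(E^1)+c_k(E^2)$ for $1\le k\le n-1$. For $k=n$ the sum has the single term $\alpha_{0,0}$, which equals $c_n(E^1)+c_n(E^2)$ as required. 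The case $k=0$ is the normalization $b_0=2=c_0(E^1)+c_0(E^2)$.

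A cleaner alternative is to use naturality and the splitting principle (Proposition~\ref{prop:splitting}) to reduce to the universal bundle over $\Gr_n(\C^\infty)\times\Gr_n(\C^\infty)$. There the cohomology is polynomial and the same coefficient comparison can be carried out. I would use this only as a consistency check on signs.
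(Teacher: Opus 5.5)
Your proposal is correct and follows essentially the same route as the paper: pull back the two classical Leray--Hirsch relations along $p_1$ and $p_2$, add them, and read off the coefficients using the uniqueness of the expansion in the basis $\mathcal B$. Your bookkeeping fills in details the paper's proof leaves implicit: the vanishing of the mixed coefficients $\alpha_{i,j}$, the fact that $c_n(E^1)$ and $c_n(E^2)$ merge into $\alpha_{0,0}$, the justification of $\xi_x=p_1^*\xi(E^1)$ via classifying maps, and the caveat that the sign convention for $c_k$ must match the one used for $\xi$.
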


\begin{proof}
Note that $\xi(E^1)$ and $\xi(E^2)$ in Equation \eqref{eq:xi-complex} satisfy the complex Leray--Hirsch theorem:
\begin{gather*}
    \xi(E^1)^n+\sum_{k=1}^n (\P\pi^1)^*(c_k(E^1))\xi(E^1)^{n-k}=0,\\
    \xi(E^2)^n+\sum_{k=1}^n (\P\pi^2)^*(c_k(E^2))\xi(E^2)^{n-k}=0.
\end{gather*}
where $(\P\pi^i)^*:H^*(B;\Z)\to H^*(PE^i;\Z)$ and $c_k(E^1),c_k(E^2)\in H^{2i}(B;\Z)$. We have the following commutative diagram:
\begin{equation*}
    \xymatrix{\P E\ar[r]^{p_1}\ar[d]_{\pi}&\P E^1\ar[d]^{\pi^1}\\ B\ar@{=}[r]&B.}
\end{equation*}
Then, $(p^1)^*\circ(\pi^1)^*=(\pi^1\circ p_1)^*=\pi^*$. The same for $(p^2)^*\circ(\pi^2)^*=(\pi^2\circ p_2)^*=\pi^*$. Substituting in Leray--Hirsch relation in Equation~\eqref{eq:linear-relation}, we get
\begin{align*}
    \xi_x^n + \xi_y^n &=p^*_1(\xi(E^1))^n+p^*_2(\xi(E^2))^n\\
    &=-\sum_{k=1}^n (\P\pi)^*(c_k(E^1))\xi(E^1)^{n-k}-\sum_{k=1}^n (\P\pi)^*(c_k(E^2))\xi(E^2)^{n-k}.
\end{align*}
Since relation in Equation~\eqref{eq:linear-relation} is unique, we have that $\wb_k=c_k(E^1)+c_k(E^2)\in H^{2k}(B;\Z)$.
\end{proof}

\begin{example}\label{ex:trivial-component}
Let $E^1 \to B$ be any complex vector bundle of rank $n$, and let $E^2 = \underline{\C}^n \to B$ be the trivial complex vector bundle of the same rank. Consider the bicomplex fiber bundle
\begin{equation*}
E = E^1\e \oplus \underline{\C}^n\ed.
\end{equation*}

Using $\xi_y^n = 0$, we now could compute the bicomplex characteristic classes of $E$ using the Leray-Hirsch Theorem~\ref{thm:leray_hirsch}, since every term containing a factor of $\xi_y^n$ vanishes. But by the previous Proposition~\ref{prop:bicomplex-class-descomposotion} 
\begin{equation*}
    \wb_k(E)=c_k(E^1)+c_k(\underline{\C})=c_k(E^1)
\end{equation*}

Thus, in this special case where one of the idempotent components is trivial, the bicomplex Chern classes recover exactly the classical Chern classes of the non-trivial complex component. In particular, the theory of complex characteristic classes is embedded in the bicomplex theory as the subfamily of bundles with a trivial $\ed$-component (or $\e$-component).

This example also illustrates that the non-triviality of the bicomplex classes arises precisely from the interaction between two non-trivial complex components. When one component is trivial, no such interaction occurs, and the theory reduces to the complex case.
\end{example}

\begin{thm}\label{thm:bicomplex_chern_properties}
    For each bicomplex fiber bundle $E \to B$, the bicomplex Chern classes and the bicomplex total Chern class in $H^*(B;\Z)$ satisfies: If $g: B' \to B$ is a continuous map, then
        \begin{equation*}
            g^*(\wb_k(E)) = \wb_k(g^*E).
        \end{equation*} 
    \end{thm}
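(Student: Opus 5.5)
The plan is to reduce naturality to the idempotent decomposition and the classical naturality of complex Chern classes, using Theorem~\ref{prop:bicomplex-class-descomposotion}. Write $E = E^1\e\oplus E^2\ed$ as in Proposition~\ref{prop:idempotent_bundles}. By the pullback rule recorded under the bundle operations, $g^*E \cong g^*(E^1)\e\oplus g^*(E^2)\ed$. Hence the idempotent complex bundles of $g^*E$ are $g^*E^1$ and $g^*E^2$, up to complex isomorphism; Remark~\ref{remark:idptbundles} ensures these are well defined up to isomorphism.

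Applying Theorem~\ref{prop:bicomplex-class-descomposotion} twice, once to $E$ and once to $g^*E$, together with the classical naturality $g^*c_k(F)=c_k(g^*F)$ for complex bundles, gives
\begin{equation*}
g^*\wb_k(E)=g^*c_k(E^1)+g^*c_k(E^2)=c_k(g^*E^1)+c_k(g^*E^2)=\wb_k(g^*E).
\end{equation*}
Here I use that $g^*$ is additive on cohomology. The case $k=0$ is trivial, since $g^*(2)=2$. Summing over $k$ gives the statement for the total class.

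As a sanity check, and as an alternative route that avoids Theorem~\ref{prop:bicomplex-class-descomposotion}, I would also run the direct Leray--Hirsch argument. The map $g$ induces a fiberwise-isomorphic map $\tilde g:\P(g^*E)\to\P E$ covering $g$. Under $\tilde g$, the tautological bundle pulls back, $\tilde g^*L(\pi)=L(g^*\pi)$. Since classifying maps compose, this gives $\tilde g^*\xi_x=\xi_x'$ and $\tilde g^*\xi_y=\xi_y'$. Pulling back relation~\eqref{eq:linear-relation} by $\tilde g^*$ and using $\tilde g^*\circ(\P\pi)^*=(\P\pi')^*\circ g^*$ then yields a relation of the same shape on $\P(g^*E)$ with coefficients $g^*\alpha_{i,j}$. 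Uniqueness of the expansion in the basis of Theorem~\ref{thm:leray_hirsch} forces $\alpha'_{i,j}=g^*\alpha_{i,j}$. Summing over $i+j=n-k$ gives the claim.

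The only delicate point is the identification of the pulled-back tautological classes, namely that $\tilde g^*L^i(\pi)\cong L^i(g^*\pi)$. This follows from the componentwise description of $L(\pi)$, so I expect no real obstacle and would present the first, shorter argument.
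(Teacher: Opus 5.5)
Your argument is correct, but your main route differs from the paper's. The paper proves naturality directly from Definition~\ref{def:bicomplex-chiern-classes}:
\begin{itemize}
\item it pulls the Leray--Hirsch relation~\eqref{eq:linear-relation} back along the induced map $\P(g)\colon\P(g^*E)\to\P E$;
\item it notes that $\xi_x,\xi_y$ pull back to the corresponding classes of $g^*E$;
\item it reads off $g^*\alpha_{i,j}$ as the new coefficients, using uniqueness in Theorem~\ref{thm:leray_hirsch}.
\end{itemize}
That is exactly your ``sanity check'' route. You actually state it more carefully than the paper does: the paper says ``applying $g^*$'', whereas what is meant is $\P(g)^*$ together with $\P(g)^*\circ(\P\pi)^*=(\P\pi')^*\circ g^*$.

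Your preferred argument instead makes naturality a formal corollary of three ingredients: Theorem~\ref{prop:bicomplex-class-descomposotion}, the compatibility of the idempotent decomposition with pullback, and classical naturality of $c_k$. This is shorter and shows that nothing beyond the complex case is happening. It is also not circular, since Theorem~\ref{prop:bicomplex-class-descomposotion} is proved using only functoriality of cohomology for maps over a fixed base, not base-change naturality of $\wb_k$.

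The price is that the real content is outsourced. It goes partly to the identifications $p_1^*\xi(E^1)=\xi_x$ and $p_2^*\xi(E^2)=\xi_y$ underlying that theorem, which the paper only asserts. It also goes to classical naturality, whose proof is itself the same Leray--Hirsch pullback argument. The paper's route, by contrast, uses only the definition. It would therefore work verbatim for any normalizing relation built polynomially from the natural classes $\xi_x,\xi_y$, not just $\xi_x^n+\xi_y^n$.
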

    
\begin{proof}
    Consider the pullback $E' = g^*E$. The classes $\xi_x(E)$ and $\xi_y(E)$ pull back to $\xi_x(E')$ and $\xi_y(E')$ under the induced map $\P(g): \P E' \to \P E$. Applying $g^*$ to the relation of Equation~\eqref{eq:linear-relation} gives the corresponding relation for $E'$, and thus $g^*(b_k(E)) = b_k(E')$.
\end{proof}

\begin{remark}[Failure of the Whitney sum formula]\label{rem:whitney-failure}
The Whitney sum formula $\wb(E \oplus F) = \wb(E)\cdot \wb(F)$ does not hold for the bicomplex characteristic classes. To see this, let $B = \mathbb{CP}^2$ and let $L \to \mathbb{CP}^2$ be the tautological complex line bundle with $a = c_1(L)$ and $a^2 \neq 0$. Define two bicomplex line bundles:
\begin{equation*}
L_1 = L\e \oplus \underline{\C}\ed \quad
\text{and}\quad L_2 = \underline{\C}\e \oplus L\ed.
\end{equation*}

Their first bicomplex Chern classes are
\begin{equation*}
\wb_1(L_1) = c_1(L) + c_1(\underline{\C}) = a + 0 = a,
\end{equation*}
\begin{equation*}
\wb_1(L_2) = c_1(\underline{\C}) + c_1(L) = 0 + a = a.
\end{equation*}
Hence their total classes are
\begin{equation*}
\wb(L_1) = 2 + a, \quad \wb(L_2) = 2 + a.
\end{equation*}

Now consider the sum $E = L_1 \oplus L_2$. Its complex components are
\begin{equation*}
E^1 = L_1^1 \oplus L_2^1 = L \oplus \underline{\C}, \quad\text{and}\quad
E^2 = L_1^2 \oplus L_2^2 = \underline{\C} \oplus L.
\end{equation*}
Both components are isomorphic to $L \oplus \underline{\C}$. Over the projective bundle $\P E \cong \P E^1\e \oplus \mathbb{P}E^2\ed$, the classes $\xi_x$ and $\xi_y$ satisfy the relations:
\begin{equation*}
\xi_x^2 + a\xi_x = 0 \quad \text{and}\quad \xi_y^2 + a\xi_y = 0.
\end{equation*}
Then
\begin{equation*}
\xi_x^2 + \xi_y^2 = -a(\xi_x + \xi_y).
\end{equation*}

Comparing with the general Leray-Hirsch relation
\begin{equation*}
\xi_x^2 + \xi_y^2 + \alpha_{0,0} + \alpha_{1,0}\xi_x + \alpha_{0,1}\xi_y + \alpha_{1,1}\xi_x\xi_y = 0,
\end{equation*}
we obtain
\begin{equation*}
\alpha_{1,0} = a, \quad \alpha_{0,1} = a, \quad \alpha_{1,1} = 0, \quad \alpha_{0,0} = 0.
\end{equation*}

The bicomplex characteristic classes of $E$ are
\begin{equation*}
\wb_1(E) = \alpha_{1,0} + \alpha_{0,1} = 2a,
\end{equation*}
\begin{equation*}
\wb_2(E) = \alpha_{0,0} = 0.
\end{equation*}
Therefore,
\begin{equation*}
\wb(E) = 2 + 2a.
\end{equation*}

However,
\begin{equation*}
b(L_1)\cdot b(L_2) = (2 + a)^2 = 4 + 4a + a^2.
\end{equation*}
Clearly,
\begin{equation*}
b(E) \neq b(L_1)\cdot b(L_2).
\end{equation*}
Thus the Whitney sum formula fails already for a direct sum of two bicomplex line bundles.
\end{remark}

\begin{remark}
In the complex case the following holds: if $E\cong\bigoplus_{i=1}^{n}L_{i}$ is a sum of complex line bundles and $E$ admits a nowhere vanishing section, then the top Chern class vanishes; equivalently,
\begin{equation*}
c_{1}(L_{1})\smile\cdots\smile c_{1}(L_{n})=0.
\end{equation*}
No analogous result holds for regular sections of bicomplex bundles. Example~\ref{ex:regular_from_zero_divisors} provides a concrete illustration: the bicomplex bundle
\begin{equation*}
E=\bigl((p_{1}^{*}\gamma_{\mathbb{C}})\mathbf{e}\oplus\underline{\mathbb{C}}\,\mathbf{e}^{\dagger}\bigr)
   \oplus
   \bigl(\underline{\mathbb{C}}\,\mathbf{e}\oplus(p_{2}^{*}\gamma_{\mathbb{C}})\mathbf{e}^{\dagger}\bigr)
\end{equation*}
over $B=\CP^1\times\CP^1$ admits the regular section
\begin{equation*}
s=(0\e+1\ed,1\e+0\ed),
\end{equation*}
yet
\begin{equation*}
b_{1}(L_1)\smile b_1(L_2)=x\smile y\neq 0\in H^{4}(B;\mathbb{Z}),
\end{equation*}
where $x=p_{1}^{*}(a)$ and $y=p_{2}^{*}(a)$ are the degree-$2$ generators coming from the two factors. Thus the existence of a regular section does not force the vanishing of the product of the first bicomplex Chern classes of the summands.
\end{remark}

The preceding example shows that the existence of a regular section in a bicomplex bundle $E$ is not governed by the vanishing of any product of the complex Chern classes of its idempotent components $E^{1}$ and $E^{2}$.

\section{Universal bicomplex Chern classes}

In this section, we study the universal  classifying space for $\GL(n,\BC)$. The idempotent decomposition of the Lie groups involved allows us to give a definition in terms of classifying space of $\GL(n,\C)$. Recall from Section~\ref{sec:bicomplex-modules} that the general linear group and the unitary group decompose as
\begin{equation*}
    \GL(n,\BC) \cong \GL(n,\C)\e \oplus \GL(n,\C)\ed\quad\text{and}\quad
    \U(n,\BC) \cong \U(n,\C)\e \oplus \U(n,\C)\ed.
\end{equation*}

These are isomorphisms of Lie groups. Consequently, the classifying spaces also decompose as products.

\begin{prop}\label{prop:classifying_spaces}
    The idempotent decomposition induces homeomorphisms
    \begin{equation*}
        \B\!\GL(n,\BC) \cong \B\!\GL(n,\C) \times \B\!\GL(n,\C),
    \end{equation*}
    and
    \begin{equation*}
        \B\!\U(n,\BC) \cong \B\!\U(n,\C) \times \B\!\U(n,\C).
    \end{equation*}
\end{prop}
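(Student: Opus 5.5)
The approach is to reduce the statement to the standard fact that the classifying space of a product of topological groups is, up to canonical homeomorphism, the product of the classifying spaces. This is most cleanly realized by Milnor's join construction, or by the Grassmannian model already constructed in the paper.

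First, make the Lie group isomorphism explicit. The map
\begin{equation*}
\Phi:\GL(n,\C)\times\GL(n,\C)\to\GL(n,\BC),\qquad (A^1,A^2)\mapsto A^1\e+A^2\ed,
\end{equation*}
is a homomorphism by Equation~\eqref{eq:matrix-multiplication}. It is a homeomorphism because the idempotent coordinates are real-linear functions of the entries. Under $\Phi$ it restricts to an isomorphism $\U(n,\C)\times\U(n,\C)\cong\U(n,\BC)$. Since the classifying space construction is functorial for continuous homomorphisms, $\Phi$ induces a homeomorphism $\B\Phi$. It therefore suffices to prove $\B(G_1\times G_2)\cong \B G_1\times \B G_2$ for $G_i=\GL(n,\C)$ or $\U(n,\C)$.

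For the product step, I would use the concrete Grassmannian model, since it matches Section~4 of the paper. Take $\B\!\GL(n,\BC)=\Gr_n(\BC^\infty)$ with universal bundle $\gamma^n_{\BC}$, whose frame bundle $\mathrm{V}_n(\BC^\infty)$ is a contractible free $\GL(n,\BC)$-space. The homeomorphism $\Gr_n(\BC^\infty)\cong\Gr_n(\C^\infty)\times\Gr_n(\C^\infty)$, already proved for finite $n$ and passed to the limit, then gives the first statement. The same holds for $\U$ by using orthonormal frames, which the componentwise Gram--Schmidt of Subsection~\ref{subsec:gram-schmidt} provides.

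Alternatively, and model-independently, note the following. If $EG_i\to BG_i$ are universal principal bundles, then $EG_1\times EG_2\to BG_1\times BG_2$ is a principal $G_1\times G_2$-bundle with contractible total space. Hence it is universal. By uniqueness of classifying spaces, the base is homotopy equivalent to $\B(G_1\times G_2)$, and it is homeomorphic when the Milnor model is replaced by the product model.

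The main obstacle is the point-set issue in taking limits. The colimit topology on $\Gr_n(\C^\infty)\times\Gr_n(\C^\infty)$ must agree with the product of the colimit topologies. This holds because both are countable CW complexes, so the product of CW topologies is the compactly generated product. I would state the result in the category of compactly generated spaces, or else settle for a homotopy equivalence, which is all that is used for cohomology later. I would also check that the contractibility of $\mathrm{V}_n(\BC^\infty)\cong \mathrm{V}_n(\C^\infty)\times\mathrm{V}_n(\C^\infty)$ follows from that of its factors.
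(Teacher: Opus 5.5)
Your proposal is correct and follows essentially the same route as the paper. The idempotent decomposition gives an isomorphism of topological groups $\GL(n,\BC)\cong\GL(n,\C)\times\GL(n,\C)$, which restricts to the unitary groups, and functoriality of $\B$ together with $\B(G_1\times G_2)\cong\B G_1\times\B G_2$ finishes the argument. Your extra care supplies details that the paper's one-line proof leaves implicit: you realize the product step concretely through the Stiefel/Grassmannian model or a product of universal bundles, and you observe that ``homeomorphism'' is model-dependent (it needs the compactly generated, countable-CW product topology, and otherwise holds only up to homotopy equivalence, which suffices for the later cohomology computations).
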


\begin{proof}
    The functoriality of the classifying space construction sends products of topological groups to products of classifying spaces. Since the idempotent decomposition is a product of Lie groups at the level of points, the result follows.
\end{proof}

By functoriality of the classifying space, the involution $\iota$ in Equation~\ref{eq:involutionmat} induces an involution map
\begin{equation}\label{eq:involution-esp}
\begin{gathered}
\tau:\B\!\GL(n,\BC)\to\B\!\GL(n,\BC)\\
([f_{1}],[f_{2}])\mapsto([f_{2}],[f_{1}]),
\end{gathered}
\end{equation}
on $\B\!\GL(n,\BC)\cong\B\!\GL(n,\C)\times\B\!\GL(n,\C)$, that interchanges factors.

\begin{prop}\label{prop:cohomology_classifying}
    The cross product gives an isomorphism
    \begin{equation*}
        H^*(\B\!\GL(n,\C);\Z) \otimes H^*(\B\!\GL(n,\C);\Z)
        \cong H^*(\B\!\GL(n,\BC);\Z).
    \end{equation*}
\end{prop}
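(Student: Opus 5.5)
We prove this by combining the product decomposition of Proposition~\ref{prop:classifying_spaces} with the Künneth theorem, using the known integral cohomology of $\B\!\GL(n,\C)$. First we replace $\B\!\GL(n,\C)$ by the homotopy equivalent space $\B\!\U(n)\simeq \Gr_n(\C^\infty)$. This is legitimate because $\U(n)\hookrightarrow\GL(n,\C)$ is a homotopy equivalence (Gram--Schmidt), so the induced map of classifying spaces is a weak equivalence, hence an isomorphism on cohomology. By Proposition~\ref{prop:classifying_spaces} we then have
\begin{equation*}
\B\!\GL(n,\BC)\simeq \Gr_n(\C^\infty)\times\Gr_n(\C^\infty).
\end{equation*}
This also agrees with the description $\Gr_n(\BC^\infty)\cong\Gr_n(\C^\infty)\times\Gr_n(\C^\infty)$ coming from Theorem~\ref{thm:classification}.

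Next we invoke the classical computation $H^*(\Gr_n(\C^\infty);\Z)\cong\Z[c_1,\dots,c_n]$ with $\deg c_k=2k$. In particular each group $H^k(\B\!\GL(n,\C);\Z)$ is a finitely generated free abelian group, and it vanishes in odd degrees. The Schubert cell decomposition of $\Gr_n(\C^\infty)$ has only even-dimensional cells and finitely many cells in each dimension, which gives a CW structure of finite type.

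We then apply the Künneth theorem for cohomology of a product of CW complexes of finite type. This gives a short exact sequence
\begin{equation*}
0\to\bigoplus_{p+q=k}H^p(X;\Z)\otimes H^q(Y;\Z)\xrightarrow{\times}H^k(X\times Y;\Z)\to\bigoplus_{p+q=k+1}\Tor\bigl(H^p(X;\Z),H^q(Y;\Z)\bigr)\to0,
\end{equation*}
with $X=Y=\B\!\GL(n,\C)$. All the cohomology groups are free, so the $\Tor$ terms vanish and the cross product is an isomorphism in each degree. The cross product is multiplicative, with the Koszul sign trivial because everything sits in even degrees. Hence this is even a ring isomorphism
\begin{equation*}
H^*(\B\!\GL(n,\BC);\Z)\cong\Z[c_1',\dots,c_n',c_1'',\dots,c_n''],
\end{equation*}
where $c_k'=p_1^*c_k$ and $c_k''=p_2^*c_k$.

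The main obstacle is applying Künneth rigorously to infinite-dimensional spaces. Cohomology is a dual theory, so the naive Künneth formula can fail without finiteness hypotheses. We handle this in one of two ways. The first is to use the finite-type CW structure explicitly: the cellular cochain complex of the product is the tensor product of the cellular chain complexes, dualized degreewise, and finite type guarantees that $\operatorname{Hom}(C_*\otimes C_*,\Z)\cong C^*\otimes C^*$ degreewise. The second is to compute on finite Grassmannians $\Gr_n(\C^N)\times\Gr_n(\C^N)$, which are compact manifolds with free cohomology. Their cohomology stabilizes in each fixed degree as $N\to\infty$, so no $\varprojlim^1$ term appears and the isomorphism passes to the limit.
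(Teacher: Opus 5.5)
Your proposal is correct and takes essentially the same route as the paper: the product decomposition of Proposition~\ref{prop:classifying_spaces}, then the Künneth formula, with the $\Tor$ terms vanishing because each $H^k(\B\!\GL(n,\C);\Z)$ is finitely generated and free (the paper cites Hatcher's version for CW complexes). Your extra care, namely the finite-type Schubert CW structure or the $\varprojlim^1$ argument over finite Grassmannians, only makes explicit the hypotheses that the paper leaves to that citation.
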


\begin{proof}
    By Proposition~\ref{prop:classifying_spaces}, $\B\!\GL(n,\BC) \cong \B\!\GL(n,\C) \times \B\!\GL(n,\C)$. The Künneth formula (\cite[Theorem~3.18]{hatcher}) applies since $H^k(\B\!\GL(n,\C);\Z)$ is a finitely generated free $\Z$-module for each $k$ (indeed, $H^*(\B\!\GL(n,\C);\Z) \cong \Z[c_1,\dots,c_n]$ with $c_k \in H^{2k}$). Hence the torsion terms vanish, and the stated isomorphism holds.
\end{proof}

Let $p_1, p_2: \B\!\GL(n,\BC) \to \B\!\GL(n,\C)$ be the projections onto the first and second factors. Explicitly, if $c_1,\dots,c_n$ denote the universal complex Chern classes, then
\begin{equation}
    H^*(\B\!\GL(n,\BC);\Z)
    \cong \Z[p_1^*(c_1), \dots, p_1^*(c_n), \, p_2^*(c_1), \dots, p_2^*(c_n)].
\end{equation}

\subsection{Reduction to the unitary group}

A crucial fact is that the unitary group is a deformation retract of the general linear group, even in the bicomplex setting.

\begin{prop}\label{prop:deformation_retract}
    The inclusion $\U(n,\BC) \hookrightarrow \GL(n,\BC)$ is a deformation retract. Consequently, we may use $\U(n,\BC)$ instead of $\GL(n,\BC)$ to define the universal bicomplex Chern classes.
\end{prop}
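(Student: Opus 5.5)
We reduce the statement to the classical complex fact via the idempotent decomposition of Section~\ref{sec:bicomplex-modules}, under which
\begin{equation*}
\GL(n,\BC)\cong\GL(n,\C)\e\oplus\GL(n,\C)\ed \quad\text{and}\quad \U(n,\BC)\cong\U(n,\C)\e\oplus\U(n,\C)\ed .
\end{equation*}
Both identifications are compatible with the inclusion $\U(n,\BC)\hookrightarrow\GL(n,\BC)$. Indeed, $UU^*=I_n$ splits as $U^1(U^1)^*\e+U^2(U^2)^*\ed=I_n\e+I_n\ed$, because the conjugate transpose acts componentwise in the idempotent coordinates. Hence the inclusion is the product of two copies of the complex inclusion $\U(n,\C)\hookrightarrow\GL(n,\C)$. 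Identifying $\U(n,\C)$ with $\U(n,\C)\e\oplus\U(n,\C)\ed$ in this way is the one point that needs care. The conjugation $M\mapsto\overline{M}$ must be the one making $\langle\cdot,\cdot\rangle_{\BC}$ of Definition~\ref{def:inner-product} invariant, so that it does not swap $\e$ and $\ed$. We fix this convention first and check that $\overline{\e}=\e$ and $\overline{\ed}=\ed$.

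Next we recall the classical deformation retraction of $\GL(n,\C)$ onto $\U(n,\C)$, given by the polar decomposition $A=UP$. Here $U=A(A^*A)^{-1/2}\in\U(n,\C)$ and $P=(A^*A)^{1/2}$ is positive definite Hermitian, and both depend continuously on $A$. The homotopy is
\begin{equation*}
H_t(A)=U\bigl((1-t)P+tI_n\bigr), \qquad t\in[0,1].
\end{equation*}
It stays in $\GL(n,\C)$ since convex combinations of positive definite matrices are positive definite, and it fixes $\U(n,\C)$ pointwise, since there $P=I_n$. We then define the bicomplex homotopy componentwise,
\begin{equation*}
\mathcal{H}_t(A^1\e+A^2\ed)=H_t(A^1)\e+H_t(A^2)\ed.
\end{equation*}
Continuity follows because the idempotent decomposition is a homeomorphism $\BC^{n\times n}\cong\C^{n\times n}\times\C^{n\times n}$. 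The map $\mathcal{H}_0$ is the identity, $\mathcal{H}_1$ lands in $\U(n,\BC)$, and $\mathcal{H}_t$ restricts to the identity on $\U(n,\BC)$. We may also write this intrinsically as the bicomplex polar decomposition $A=U\,(A^*A)^{1/2}$. The square root is computed componentwise, since $A^*A=(A^1)^*A^1\e+(A^2)^*A^2\ed$ by \eqref{eq:matrix-multiplication}.

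For the consequence, a deformation retraction is in particular a homotopy equivalence of topological groups. The induced map $\B\U(n,\BC)\to\B\GL(n,\BC)$ is therefore a homotopy equivalence: it is the product of the classical equivalences $\B\U(n,\C)\simeq\B\GL(n,\C)$ under Proposition~\ref{prop:classifying_spaces}. By Proposition~\ref{prop:cohomology_classifying}, the universal classes $p_1^*(c_k)$ and $p_2^*(c_k)$, and hence $\wb_k=p_1^*(c_k)+p_2^*(c_k)$ by Theorem~\ref{prop:bicomplex-class-descomposotion}, can equally be defined on $\B\U(n,\BC)$. Geometrically, every bicomplex bundle admits a reduction of structure group to $\U(n,\BC)$. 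One obtains it by choosing Hermitian metrics on $E^1$ and $E^2$ with a partition of unity and applying the bicomplex Gram--Schmidt process of Subsection~\ref{subsec:gram-schmidt}.
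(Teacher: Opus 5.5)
Your proposal is correct and takes essentially the same route as the paper: you split $A=A^1\e+A^2\ed$ via the idempotent decomposition and apply the classical deformation retraction of $\GL(n,\C)$ onto $\U(n,\C)$ to each component, using the polar decomposition where the paper invokes Gram--Schmidt, which is an immaterial difference. Your explicit homotopy makes the argument more complete than the paper's sketch, as does your remark that the conjugation defining $M^*$ must fix $\e$ and $\ed$; otherwise $UU^*=I_n$ would couple $U^1$ and $U^2$ rather than split as $\U(n,\C)\e\oplus\U(n,\C)\ed$.
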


\begin{proof}
    This is a consequence of the existence of a bicomplex Gram--Schmidt process given in Subsection~\ref{subsec:gram-schmidt}. By the idempotent decomposition, any matrix $A \in \GL(n,\BC)$ decomposes as $A = A^1\e + A^2\ed$ with $A^1, A^2 \in \GL(n,\C)$. Applying the complex Gram--Schmidt process to $A^1$ and $A^2$ independently yields unitary matrices $U^1 \in \U(n,\C)$ and $U^2 \in \U(n,\C)$, and hence a bicomplex unitary matrix $U = U^1\e + U^2\ed \in \U(n,\BC)$. The standard deformation retraction from $\GL(n,\C)$ to $\U(n,\C)$ applies componentwise, giving the desired homotopy.
\end{proof}

Thus, in the sequel, we may freely replace $\GL(n,\BC)$ by $\U(n,\BC)$ when defining universal characteristic classes.

\subsection{Universal bicomplex Chern classes}

The idempotent decomposition of the tautological bundle $\gamma_\BC^n\to \Gr_n(\BC^\infty)$ provides two canonical complex subbundles $\gamma_\C^n\e$ and $\gamma_\C^n\ed$ of rank $n$. Let us denote by $c_k^{\mathrm{univ}}$ the $k$-th universal complex Chern class.

\begin{defn}\label{def:universal-b}
The \emph{universal bicomplex Chern classes} are
\begin{equation*}
\mathbf{b}_k= c_k(\gamma\e) \;+\; c_k(\gamma\ed)
\;\in\; H^{2k}(\Gr_n(\BC^\infty);\Z),\qquad k=1,\dots,n.
\end{equation*}
\end{defn}

These classes satisfy the following natural properties, which are immediate from the definition and the corresponding properties of ordinary Chern classes:

\begin{prop}\label{prop:universal-axioms}
The classes $\mathbf{b}_1,\dots,\mathbf{b}_n$ satisfy:
\begin{enumerate}
\item\label{it:nat} \textbf{Naturality.} For every bicomplex bundle $E\to B$ of rank $n$ with classifying map $f:B\to \Gr_n(\BC^\infty)$,
\begin{equation*}
b_k(E)=f^*\mathbf{b}_k.
\end{equation*}
\item\label{it:nor} \textbf{Normalisation.} For the universal bicomplex line bundle over $\mathbb{BCP}^\infty$,
\begin{equation*}
\mathbf{b}=2+x+y\in H^2(\mathbb{BCP}^\infty;\Z)\cong\Z[x,y].
\end{equation*}
\item \label{it:inv}\textbf{$\tau$--invariance.} If $\tau$ is the involution map in Equation~\ref{eq:involution-esp}, then $\tau^*\mathbf{b}_k=\mathbf{b}_k$ for all $k$.
\item\label{it:red} \textbf{Reduction to one factor.} With $i_1,i_2:\Gr_n(\C^\infty)\hookrightarrow \Gr_n(\BC^\infty)$ the inclusions of the two factors,
\begin{equation*}
i_1^*\mathbf{b}_k=c_k^{\mathrm{univ}}=i_2^*\mathbf{b}_k.
\end{equation*}
\item\label{it:diag} \textbf{Diagonal restriction.} With $\Delta:\operatorname{Gr}_n(\C^\infty)\to \Gr_n(\BC^\infty)$ the diagonal embedding,
\begin{equation*}
\Delta^*\mathbf{b}_k=2\,c_k^{\mathrm{univ}}.
\end{equation*}
\end{enumerate}
\end{prop}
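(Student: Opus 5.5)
The plan is to reduce every item to the identification $\Gr_n(\BC^\infty)\cong\Gr_n(\C^\infty)\times\Gr_n(\C^\infty)$ together with the decomposition $\gamma_\BC^n=\gamma_\C^n\e\oplus\gamma_\C^n\ed$. Under this identification, $\gamma\e=p_1^*\gamma_\C^n$ and $\gamma\ed=p_2^*\gamma_\C^n$, where $p_1,p_2$ are the two projections. By naturality of ordinary Chern classes, Definition~\ref{def:universal-b} therefore reads
\begin{equation*}
\mathbf{b}_k=p_1^*c_k^{\mathrm{univ}}+p_2^*c_k^{\mathrm{univ}}.
\end{equation*}
Every item will then be a pullback computation using the compositions $p_j\circ h$ for the relevant map $h$.

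For \eqref{it:nat}, take the classifying map $f=(f_1,f_2)$ from the Remark on the idempotent decomposition of the classifying map. Then $f_j$ classifies $E^j$, because $f^*\gamma_\BC^n=f_1^*\gamma_\C^n\e\oplus f_2^*\gamma_\C^n\ed$ and Remark~\ref{remark:idptbundles} applies. Hence
\begin{equation*}
f^*\mathbf{b}_k=f_1^*c_k^{\mathrm{univ}}+f_2^*c_k^{\mathrm{univ}}=c_k(E^1)+c_k(E^2),
\end{equation*}
and this equals $b_k(E)$ by Theorem~\ref{prop:bicomplex-class-descomposotion}. For \eqref{it:nor}, set $n=1$, so the base is $\BCP^\infty=\CP^\infty\times\CP^\infty$. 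With $c_1(\gamma_\C)=a$ we get $\mathbf{b}_1=x+y$, and adding $b_0=2$ gives $2+x+y$. This matches the earlier definition of $b_1$ for line bundles. The point needing care here is the sign convention for $a$ (tautological versus hyperplane class), and I will follow the paper's normalization.

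For \eqref{it:inv}, $\tau$ swaps the two factors, so $p_1\circ\tau=p_2$ and $p_2\circ\tau=p_1$. It follows that $\tau^*\mathbf{b}_k=p_2^*c_k+p_1^*c_k=\mathbf{b}_k$. For \eqref{it:red}, the inclusion $i_1$ has to be taken at a basepoint $V_0\in\Gr_n(\C^\infty)$ of the second factor, so $i_1(V)=(V,V_0)$. Then $p_1\circ i_1=\mathrm{id}$ and $p_2\circ i_1$ is constant. Pulling back gives $i_1^*\mathbf{b}_k=c_k^{\mathrm{univ}}+0$, because the pullback of a positive-degree class along a constant map vanishes. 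The same argument works for $i_2$. For \eqref{it:diag}, $p_1\circ\Delta=p_2\circ\Delta=\mathrm{id}$, and this gives $2c_k^{\mathrm{univ}}$.

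The main obstacle is the first step: justifying that the homeomorphism of the Grassmannian really carries $\gamma\e$ and $\gamma\ed$ to $p_1^*\gamma_\C^n$ and $p_2^*\gamma_\C^n$, and that the classifying maps decompose compatibly. I will argue this fiberwise. The fiber of $\gamma\e$ over $(V^1,V^2)$ is $V^1$, which is exactly the fiber of $p_1^*\gamma_\C^n$, and the two bundles sit compatibly inside the trivial bundle with fiber $\C^\infty$. Once this is in place, everything else is formal.
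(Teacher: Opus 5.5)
Your proposal is correct and is essentially the paper's argument. The paper only asserts that all five properties are immediate from Definition~\ref{def:universal-b} and the corresponding properties of ordinary Chern classes; your rewriting $\mathbf{b}_k=p_1^*c_k^{\mathrm{univ}}+p_2^*c_k^{\mathrm{univ}}$, followed by pulling back along $f$, $\tau$, $i_1$, $i_2$, $\Delta$ (with naturality resting on Theorem~\ref{prop:bicomplex-class-descomposotion}), makes that explicit and also supplies details the paper leaves implicit, namely the basepoint needed for $i_1,i_2$ and the sign convention for $a$.
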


\begin{remark}\label{rem:axioms_not_unique}
Properties \eqref{it:nat}--\eqref{it:diag} do not characterize the classes $\mathbf{b}_{k}$ uniquely.
To see this, write  $a_{k}=p_{1}^{*}(c_{k}^{\mathrm{univ}})$ and $b_{k}=p_{2}^{*}(c_{k}^{\mathrm{univ}})$ in $H^{*}\bigl(\B\!\GL(n,\BC);\Z)$, which are the pullbacks of the universal complex Chern classes via the two projections
$p_{1},p_{2}\colon\B\!\GL(n,\B\C)\to\B\!\GL(n,\C)$.
For $n\geq 2$, set
\begin{equation*}
Q=a_{1}a_{2}b_{1}+a_{1}b_{1}b_{2}-a_{1}^{2}b_{2}-a_{2}b_{1}^{2}
\in H^{8}\bigl(\operatorname{BGL}(n,\mathbb{B}\mathbb{C});\mathbb{Z}\bigr).
\end{equation*}
Notice that $\Delta^*(a_1)=\Delta^*(b_1)=c_1$ and $\Delta^*(a_2)=\Delta^*(b_2)=c_2$, which gives $\Delta^*(Q)=0$. Then the modified class $\mathbf{b}_{4}'=\mathbf{b}_{4}+Q$ also satisfies properties \eqref{it:nat}--\eqref{it:diag}, yet $\mathbf{b}_{4}'\neq\mathbf{b}_{4}$.
\end{remark}

\section{The bicomplex Chern character}\label{sec:chern-char}

In this section we construct the bicomplex analogue of the classical Chern character.
A naive attempt to define it via the splitting principle and Newton polynomials in the classes $b_k(E)$ runs into an essential obstruction: the $b_k$'s are not the elementary symmetric functions of any invariant system of ``roots'' associated to a bicomplex splitting.  We shall instead define the character directly from the idempotent decomposition, which yields a natural additive invariant.  We also give its multiplicative lift to cohomology with bicomplex coefficients.

\subsection{Idempotent decomposition of the splitting principle}\label{subsec:genuine-splitting}

Let $\pi:E\to B$ be a bicomplex vector bundle of rank $n$. Consider the canonical idempotent decomposition $E=E^1\e\oplus E^2\ed$. A bicomplex flag in $E$
\begin{equation*}
0=F_{0}\subset F_{1}\subset\cdots\subset F_{n}=E
\end{equation*}
with $\operatorname{rank}_{\BC}F_{i}=i$, has an idempotent decomposition into a pair of ordinary complex flags
\begin{equation*}
0=F^{1}_{0}\subset F^{1}_{1}\subset\cdots\subset F^{1}_{n}=E^1,\quad
0=F^{2}_{0}\subset F^{2}_{1}\subset\cdots\subset F^{2}_{n}=E^2
\end{equation*}
with the same indexing.  The associated bundle of bicomplex flags is therefore 
\begin{equation*}
\operatorname{Fl}_{\BC}(E)=\operatorname{Fl}(E^1)\times_B\operatorname{Fl}(E^2).
\end{equation*}
Over $\operatorname{Fl}_{\BC}(E)$ the bundle $E$ splits as a sum of bicomplex line bundles
\begin{equation*}
E\cong\bigoplus_{i=1}^{n}M_{i},\quad\text{with}\quad M_{i}=L^{1}_{i}\e\oplus L^{2}_{i}\ed,
\end{equation*}
where $L^{1}_{i}=F^{1}_{i}/F^{1}_{i-1}$ and $L^{2}_{i}=F^{2}_{i}/F^{2}_{i-1}$.  The first bicomplex Chern classes
\begin{equation*}
t_{i}=b_{1}(M_{i})=c_{1}(L^{1}_{i})+c_{1}(L^{2}_{i})\in H^{2}\bigl(\operatorname{Fl}_{\BC}(E);\mathbb Z\bigr)
\end{equation*}
are invariant under the diagonal action of the symmetric group $S_{n}$ (simultaneous permutation of the two complex flags), but not under the full product $S_{n}\times S_{n}$.  Consequently, the unordered collection $\{t_{1},\dots,t_{n}\}$ is not a well-defined invariant of $E$; the elementary symmetric functions $e_{k}(t_{1},\dots,t_{n})$ depend on the choice of simultaneous splitting and do not descend to $H^{2k}(B;\mathbb Z)$ in general.  This explains why the $b_{k}(E)$ cannot be obtained from the $t_{i}$ via the classical Newton identities.

\subsection{The bicomplex Chern character}

We now define two versions of the Chern character.  The first takes values in ordinary rational cohomology; the second in cohomology with bicomplex coefficients and is multiplicative with respect to the bicomplex tensor product.

\subsubsection*{The ordinary bicomplex Chern character}

\begin{defn}\label{def:ch-ordinary}
Let $E\to B$ be a bicomplex vector bundle of rank $n$.  Its \emph{bicomplex Chern character} is
\begin{equation*}
\operatorname{ch}_{\BC}(E)
=\operatorname{ch}(E^1)+\operatorname{ch}(E^2)
\;\in\;H^{\mathrm{ev}}(B;\mathbb Q),
\end{equation*}
where $H^{\mathrm{ev}}(B;\mathbb Q)$ is the rational even-degree cohomology ring of $B$.
\end{defn}

Since the idempotent decomposition is canonical, $\operatorname{ch}_{\BC}(E)$ is well-defined and natural with respect to pullbacks.  Expanding the two classical characters, we obtain
\begin{equation*}
\operatorname{ch}_{\BC}(E)
=2n+\sum_{k\ge 1}\frac{1}{k!}\bigl(p_{k}(E^1)+p_{k}(E^2)\bigr),
\end{equation*}
where $p_{k}$ denotes the $k$-th Newton polynomial in the ordinary Chern classes.  In particular, the component of degree $2k$ is
\begin{equation*}
\operatorname{ch}_{\BC}(E)_{2k}
=\frac{1}{k!}\bigl(c_{1}(E^1)^{k}+\cdots+c_{k}(E^1)+\cdots\bigr)
+\frac{1}{k!}\bigl(c_{1}(E^2)^{k}+\cdots+c_{k}(E^2)+\cdots\bigr).
\end{equation*}

\begin{prop}[Properties of $\operatorname{ch}_{\BC}$]\label{prop:ch-properties}
Let $E,F\to B$ be bicomplex vector bundles.
\begin{enumerate}
\item\label{it:chern-add} \textbf{Additivity.} $\operatorname{ch}_{\BC}(E\oplus F)=\operatorname{ch}_{\BC}(E)+\operatorname{ch}_{\BC}(F)$.
\item\label{it:chern-nat} \textbf{Naturality.} For any continuous map $g:B'\to B$,
$g^{*}\operatorname{ch}_{\BC}(E)=\operatorname{ch}_{\BC}(g^{*}E)$.
\item\label{it:chern-norm} \textbf{Normalisation for line bundles.} If $L=L^{1}\e\oplus L^{2}\ed$ is a bicomplex line bundle, then
\begin{equation*}
\operatorname{ch}_{\BC}(L)=e^{c_{1}(L^{1})}+e^{c_{1}(L^{2})}.
\end{equation*}
\end{enumerate}
\end{prop}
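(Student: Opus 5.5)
All three properties follow from Definition~\ref{def:ch-ordinary}, which writes $\operatorname{ch}_{\BC}$ as a sum of two classical Chern characters. The plan is therefore to reduce each statement, through the idempotent decomposition of Proposition~\ref{prop:idempotent_bundles}, to the corresponding property of the ordinary Chern character applied to $E^1$ and $E^2$ separately. The only input specific to the bicomplex setting is that the decomposition $E\mapsto(E^1,E^2)$ commutes with the bundle operations listed in the subsection on bundle operations. That compatibility was recorded there, so I will cite it rather than reprove it.

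\textbf{Additivity.} Write $E=E^1\e\oplus E^2\ed$ and $F=F^1\e\oplus F^2\ed$. By the direct sum rule, $E\oplus F=(E^1\oplus F^1)\e\oplus(E^2\oplus F^2)\ed$. Because the decomposition is canonical up to isomorphism and the classical $\operatorname{ch}$ depends only on isomorphism class, the idempotent components of $E\oplus F$ are $E^i\oplus F^i$. Additivity of the classical Chern character then gives
\begin{equation*}
\operatorname{ch}_{\BC}(E\oplus F)=\operatorname{ch}(E^1)+\operatorname{ch}(F^1)+\operatorname{ch}(E^2)+\operatorname{ch}(F^2).
\end{equation*}
Regrouping the four terms yields $\operatorname{ch}_{\BC}(E)+\operatorname{ch}_{\BC}(F)$.

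\textbf{Naturality.} The pullback rule gives $g^*E=g^*(E^1)\e\oplus g^*(E^2)\ed$, so the idempotent components of $g^*E$ are $g^*E^1$ and $g^*E^2$. The map $g^*$ on rational cohomology is additive, and the classical $\operatorname{ch}$ is natural, so
\begin{equation*}
g^*\operatorname{ch}_{\BC}(E)=g^*\operatorname{ch}(E^1)+g^*\operatorname{ch}(E^2)=\operatorname{ch}(g^*E^1)+\operatorname{ch}(g^*E^2)=\operatorname{ch}_{\BC}(g^*E).
\end{equation*}

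\textbf{Normalisation for line bundles.} A bicomplex line bundle $L$ has idempotent components $L^1,L^2$ that are complex line bundles of rank $1$. For a complex line bundle the classical normalisation gives $\operatorname{ch}(L^i)=e^{c_1(L^i)}$, and summing the two yields the stated formula.

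The only point needing care is that the definition is well posed: the pair $(E^1,E^2)$ must be determined by $E$ up to isomorphism, with a fixed ordering of the two factors. This is exactly the canonicity statement in Proposition~\ref{prop:idempotent_bundles}, since $E^1=E\otimes_{\BC}\underline{\C}\e$ and $E^2=E\otimes_{\BC}\underline{\C}\ed$ are distinguished by which idempotent acts as the identity. Once that is granted, there is no real obstacle; the remainder is bookkeeping.
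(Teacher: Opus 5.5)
Your proposal is correct and follows the same route as the paper: each property is reduced, through the compatibility of the idempotent decomposition with direct sums and pullbacks, to the corresponding classical property of $\operatorname{ch}$ applied to $E^1$ and $E^2$, and normalisation is the rank-one case of the definition. You also check explicitly that the ordered pair $(E^1,E^2)$ is well defined up to isomorphism, which the paper leaves implicit.
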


\begin{proof}
The proof of \eqref{it:chern-add} follows from $(E\oplus F)\e=E^1\e\oplus F^1\e$ and the additivity of the classical Chern character. Meanwhile \eqref{it:chern-nat} is immediate because the idempotent decomposition commutes with pullback. And \eqref{it:chern-norm} is the rank-one case of the definition.
\end{proof}

\begin{remark}[Failure of multiplicativity]\label{rem:non-mult}
In general,
\begin{equation*}
\operatorname{ch}_{\BC}(E\otimes_{\BC}F)\neq\operatorname{ch}_{\BC}(E)\cdot\operatorname{ch}_{\BC}(F).
\end{equation*}
Indeed, $(E\otimes_{\BC}F)\e\cong E^1\e\otimes_{\C} F
^1\ed$ and similarly for $\ed$, so the classical multiplicativity gives
\begin{equation*}
\operatorname{ch}_{\BC}(E\otimes_\BC F)
=\operatorname{ch}(E^1)\operatorname{ch}(F^1)+\operatorname{ch}(E^2)\operatorname{ch}(F^2),
\end{equation*}
whereas the product of the ordinary characters is
\begin{equation*}
\operatorname{ch}_{\BC}(E)\operatorname{ch}_{\BC}(F)
=\operatorname{ch}(E^1)\operatorname{ch}(F^1)
+\operatorname{ch}(E^1)\operatorname{ch}(F^2)
+\operatorname{ch}(E^2)\operatorname{ch}(F^1)
+\operatorname{ch}(E^2)\operatorname{ch}(F^2).
\end{equation*}
The cross terms $\operatorname{ch}(E^1)\operatorname{ch}(F^2)$ and $\operatorname{ch}(E^2)\operatorname{ch}(F^1)$ have no counterpart in the bicomplex tensor product.
\end{remark}

\subsubsection*{The multiplicative lift}

The obstruction in Remark~\ref{rem:non-mult} disappears if we remember the idempotent origin of the two summands.

\begin{defn}\label{def:ch-lift}
The \emph{bicomplex Chern character with coefficients} is
\begin{equation*}
\widetilde{\operatorname{ch}}_{\BC}(E)
=\operatorname{ch}(E^1)\e+\operatorname{ch}(E^2)\ed\in\;H^{\mathrm{ev}}(B;\BC\otimes\mathbb Q).
\end{equation*}
\end{defn}

Because $\BC\otimes\mathbb Q\cong\mathbb Q\e\oplus\mathbb Q\ed$ with componentwise multiplication, we have
\begin{equation*}
\widetilde{\operatorname{ch}}_{\BC}(E\otimes_{\BC}F)
=\widetilde{\operatorname{ch}}_{\BC}(E)\cdot\widetilde{\operatorname{ch}}_{\BC}(F),
\end{equation*}
and
\begin{equation*}
\widetilde{\operatorname{ch}}_{\BC}(E\oplus F)
=\widetilde{\operatorname{ch}}_{\BC}(E)+\widetilde{\operatorname{ch}}_{\BC}(F).
\end{equation*}
Thus $\widetilde{\operatorname{ch}}_{\BC}$ is a ring homomorphism from the bicomplex $K$-theory $K_{\BC}(B)$ to $H^{\mathrm{ev}}(B;\BC\otimes\mathbb Q)$.  The ordinary character of Definition~\ref{def:ch-ordinary} is recovered by the augmentation $\BC\otimes\mathbb Q\to\mathbb Q$, $\e,\ed\mapsto 1$.

\subsection{Comparison with the bicomplex Chern classes}

The bicomplex Chern classes $b_{k}(E)=c_{k}(E^1)+c_{k}(E^2)$ are, in a precise sense, the traces of the idempotent components.  The Chern character $\operatorname{ch}_{\BC}(E)$ is strictly stronger: it cannot be expressed as a polynomial (or even a convergent power series) in the classes $b_{1}(E),\dots,b_{n}(E)$ alone.

\begin{example}
Take $B=\C\P^{1}$ and let $\gamma_\C^1\to\C\P^{1}$ be the tautological line bundle with $c_{1}(L)=a\neq 0$.  Set
\begin{equation*}
E=\gamma_\C^1\e\oplus {\gamma_\C^1}^{*}\ed\quad \text{and}\quad
F=\underline{\C}\e\oplus\underline{\C}\ed.
\end{equation*}
Then $E^1=\gamma_\C^1$, $E^2={\gamma_\C^1}^{*}$, $F^1=F^2=\underline{\mathbb C}$.  Hence
\begin{equation*}
b_{1}(E)=c_{1}(\gamma_\C^1)+c_{1}({\gamma_\C^1}^{*})=a-a=0=b_{1}(F),
\end{equation*}
and $b_{k}(E)=b_{k}(F)=0$ for $k>1$.  However,
\begin{equation*}
\operatorname{ch}_{\BC}(E)=e^{a}+e^{-a}=2+a^{2}+\frac{a^{4}}{12}+\cdots\neq 2=\operatorname{ch}_{\BC}(F).
\qedhere
\end{equation*}
\end{example}

\begin{remark}
The same example shows that the ordinary bicomplex Chern character does not factor through the subring $\mathbb Z[b_{1},\dots,b_{n}]\subset H^{*}(B;\mathbb Z)$ generated by the bicomplex Chern classes.  The classes $b_{k}$ detect the diagonal (symmetric) part of the pair $(E^1,E^2)$, whereas $\operatorname{ch}_{\BC}$ detects each component separately.
\end{remark}

\subsection{Bicomplex K-theory}\label{subsec:bicomplex_K}

Let $B$ be a connected paracompact Hausdorff space. The \emph{bicomplex $K$-group} $K_{\mathbb{B}\mathbb{C}}(B)$ is defined as the Grothendieck group of the monoid of isomorphism classes of bicomplex vector bundles over $B$, with addition induced by the direct sum $\oplus$ and relations $[E]=[E']+[E'']$ for every short exact sequence
\begin{equation*}
0\longrightarrow E'\longrightarrow E\longrightarrow E''\longrightarrow 0
\end{equation*}
of bicomplex bundles.

The idempotent decomposition is exact, that is such a short exact sequence is equivalent to a pair of short exact sequences of complex bundles $0\to E'^{i}\to E^{i}\to E''^{i}\to 0$ for $i=1,2$. This yields a natural homomorphism
\begin{gather*}
\Phi:K_{\mathbb{B}\mathbb{C}}(B)\to K(B)\times K(B),\\
[E]\mapsto ([E^{1}],[E^{2}]),
\end{gather*}
where $K(B)$ is the complex $K$-group. Because $B$ is connected, the rank of a complex virtual bundle is a single integer, and for every bicomplex bundle we have $\operatorname{rk}(E^{1})=\operatorname{rk}(E^{2})$. Hence the image of $\Phi$ lies in the fibre product
\begin{equation*}
K(B)\times_{\mathbb{Z}}K(B)
=\bigl\{(E^{1},E^{2})\in K(B)\times K(B)\mid 
\operatorname{rk}(E^{1})=\operatorname{rk}(E^{2})\bigr\}.
\end{equation*}

\begin{prop}\label{prop:K_is_fibre_product}
The map $\Phi$ induces an isomorphism
\begin{equation*}
K_{\mathbb{B}\mathbb{C}}(B)\;\cong\;K(B)\times_{\mathbb{Z}}K(B).
\end{equation*}
\end{prop}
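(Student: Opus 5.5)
We will prove that $\Phi$ is a well-defined group homomorphism, then that it is injective and that its image is exactly the fibre product $K(B)\times_{\Z}K(B)$. The main tools are Proposition~\ref{prop:idempotent_bundles}, Remark~\ref{remark:idptbundles}, and the direct-sum formula $(E\oplus F)^i=E^i\oplus F^i$.

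\textbf{Well-definedness.} The assignment $E\mapsto([E^1],[E^2])$ is additive on the monoid of isomorphism classes, because the idempotent decomposition commutes with $\oplus$. By the exactness remark preceding the statement, it sends each short exact sequence to a pair of short exact sequences of complex bundles. Hence it respects the defining relations, and the universal property of the Grothendieck group gives the homomorphism $\Phi$. Its image lies in the fibre product since $\operatorname{rk}E^1=\operatorname{rk}E^2$.

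\textbf{Surjectivity.} Take $(x_1,x_2)\in K(B)\times_{\Z}K(B)$ and write $x_i=[A_i]-[C_i]$ with complex bundles $A_i,C_i$.

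1. We first equalize the ranks of the subtracted bundles. Replacing $C_1,C_2$ by $C_1\oplus\underline{\C}^{r}$ and $C_2\oplus\underline{\C}^{s}$ for suitable $r,s$ (and adding the same trivial bundles to $A_1,A_2$), we may assume $\operatorname{rk}C_1=\operatorname{rk}C_2$.

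2. Since $\operatorname{rk}x_1=\operatorname{rk}x_2$, it then follows that $\operatorname{rk}A_1=\operatorname{rk}A_2$.

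3. By Example~\ref{ex: universal}, $A=A_1\e\oplus A_2\ed$ and $C=C_1\e\oplus C_2\ed$ are bicomplex bundles.

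4. Therefore $\Phi([A]-[C])=(x_1,x_2)$. On a connected base all ranks are constant, so this is legitimate.

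\textbf{Injectivity.} Suppose $\Phi([E]-[F])=0$. Then $[E^i]=[F^i]$ in $K(B)$, so there are complex bundles $G_i$ with $E^i\oplus G_i\cong F^i\oplus G_i$.

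1. As before, we pad $G_1$ and $G_2$ with trivial bundles so that $\operatorname{rk}G_1=\operatorname{rk}G_2$.

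2. Set $G=G_1\e\oplus G_2\ed$. Remark~\ref{remark:idptbundles} then gives $E\oplus G\cong F\oplus G$ as bicomplex bundles.

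3. Hence $[E]=[F]$ in $K_{\BC}(B)$.

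\textbf{Main obstacle.} The step that needs the most care is the stable cancellation used in injectivity. For the standard fact that $[E^i]=[F^i]$ implies $E^i\oplus G_i\cong F^i\oplus G_i$ for some $G_i$, we must use the monoid-completion description of $K$, which requires checking that the short-exact-sequence relations add nothing beyond direct sums. Over a compact or paracompact base, every short exact sequence of bicomplex bundles splits: orthogonal complements via the componentwise Gram--Schmidt process of Subsection~\ref{subsec:gram-schmidt} give the splitting. So $K_{\BC}(B)$ is just the group completion of the direct-sum monoid, and likewise for $K(B)$. This reduces the argument to the elementary rank-padding manipulations above.
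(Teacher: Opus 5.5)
Your argument is correct and is essentially the paper's proof. The paper pads with trivial bundles and defines the inverse $\Psi(\eta_1,\eta_2)=[E^1\e\oplus E^2\ed]-[F^1\e\oplus F^2\ed]$, which is your surjectivity step; it then only asserts that $\Psi$ is independent of representatives, and your stable-cancellation injectivity argument is what verifies this. Your padding is also the right one: you arrange $\operatorname{rk}A_1=\operatorname{rk}A_2$ and $\operatorname{rk}C_1=\operatorname{rk}C_2$, which is what is needed for the bicomplex bundles to exist, whereas the paper's stated condition on rank differences already follows from the fibre-product condition.
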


\begin{proof}
The homomorphism $\Phi$ is well defined because the idempotent decomposition preserves direct sums and sends short exact sequences to pairs of short exact sequences. To construct an inverse, let $(\eta_{1},\eta_{2})\in K(B)\times_{\mathbb{Z}}K(B)$. Choose representatives $\eta_{1}=[E^{1}]-[F^{1}]$ and $\eta_{2}=[E^{2}]-[F^{2}]$. By adding a sufficiently large trivial bundle to both members of each difference, we may assume that $E^{1},F^{1},E^{2},F^{2}$ are genuine bundles with
\begin{equation*}
\operatorname{rk}(E^{1})-\operatorname{rk}(F^{1})=\operatorname{rk}(E^{2})-\operatorname{rk}(F^{2}).
\end{equation*}
Set
\begin{equation*}
\Psi(\eta_{1},\eta_{2})
=[E^{1}\mathbf{e}\oplus E^{2}\mathbf{e}^{\dagger}]
-\bigl[F^{1}\mathbf{e}\oplus F^{2}\mathbf{e}^{\dagger}]
\in K_{\mathbb{B}\mathbb{C}}(B).
\end{equation*}
This is independent of the chosen representatives and defines a two-sided inverse to $\Phi$.
\end{proof}

The character $\operatorname{ch}_{\BC}$ is nothing but the map
\begin{equation*}
([E^{1}],[E^{2}])\mapsto\operatorname{ch}(E^{1})+\operatorname{ch}(E^{2}),
\end{equation*}
which is a group homomorphism but not a ring homomorphism because the multiplication in $K_{\BC}(B)$ is $(E^{1},E^{2})\otimes(F^{1},F^{2})=(E^{1}\otimes F^{1},E^{2}\otimes F^{2})$, whereas the product in $H^{\mathrm{ev}}(B;\mathbb Q)$ mixes the two factors.  The lift $\widetilde{\operatorname{ch}}_{\BC}$ restores multiplicativity at the expense of enlarging the coefficient ring to $\BC\otimes\mathbb Q$.

\section{(Almost) bicomplex manifolds}

In this final section, we apply the theory of bicomplex characteristic classes to the study of differentiable manifolds equipped with a bicomplex structure on their tangent bundle. This is the bicomplex analogue of the classical notion of an almost complex manifold. We observe that the notion of bicomplex manifolds also appears in \cite{BairdWood} and \cite{SalimovCakan}. 

\begin{defn}\label{def:almost_bicomplex}
Let $M$ be a smooth real manifold. An \emph{almost bicomplex structure} on $M$ is a pair $(I,J)$ of real endomorphisms of the tangent bundle $TM$ such that
\begin{equation*}
I^{2}=J^{2}=-\mathrm{Id},\quad IJ=JI\quad \text{and}\quad I\neq\pm J.
\end{equation*}
The triple $(M,I,J)$ is called an \emph{almost bicomplex manifold}.
\end{defn}

This is the natural infinitesimal version of a bicomplex manifold. Indeed, a bicomplex manifold (defined via holomorphic charts with values in $\BC^n$, see \cite{bra-rab-rom-bicomplex}) induces an almost bicomplex structure on its underlying real tangent bundle, just as a complex manifold induces an almost complex structure.

If $I=J$ (resp. $I=-J$), then $K=IJ=-\mathrm{Id}$ (resp. $K=\mathrm{Id}$), and the $\mathbb{BC}$-action on $TM$ factors through the projection $\mathbb{BC}\to\mathbb{C}$ given by $\mathbf{e}\mapsto 1$, $\mathbf{e}^{\dagger}\mapsto 0$ (resp. $\mathbf{e}\mapsto 0$, $\mathbf{e}^{\dagger}\mapsto 1$). In either case the structure reduces to an ordinary almost complex structure. We exclude these degenerate cases so that the theory captures the genuinely bicomplex phenomenon.

\begin{remark}
    Analogous results to \cite[Appendix~A]{hodge-theory} (to appear in a future article of the second author) allow us to affirm the following: If $(M,I,J)$ is an almost bicomplex manifold, then for each $p \in M$, the pair $(I_p, J_p)$ defines a $\BC$-vector space structure on $T_pM$ by
    \begin{equation*}
        (x + y\,\iunit + z\,\junit + w\,\kunit) \cdot v
        = x v + y I_p(v) + z J_p(v) + w K_p(v),
    \end{equation*}
    where $K_p = I_p J_p$. Since $\BC$ is a real algebra of dimension $4$, the tangent space $T_pM$ must have real dimension divisible by $4$. Hence any almost bicomplex manifold has rank $4n$ for some integer $n$.

    Moreover, because the structure group of the tangent bundle reduces to $\GL(n,\BC)$, which is connected (indeed, $\GL(n,\BC) \cong \GL(n,\C) \times \GL(n,\C)$ and each factor is connected), the manifold $M$ is orientable. In fact, the first bicomplex Chern class $b_1(TM) \in H^2(M;\Z)$ provides an obstruction to the existence of such a structure, as we shall see below.
\end{remark}

\subsection{Examples}

We now give two fundamental examples.

\begin{example}\label{ex:product_complex}
    Let $(M_1, J_1)$ and $(M_2, J_2)$ be almost complex manifolds. Then the product $M = M_1 \times M_2$ admits an almost bicomplex structure given by
    \begin{equation*}
        I = J_1 \oplus J_2 \quad \text{and}\quad J = J_1 \oplus (-J_2),
    \end{equation*}
    where we identify $TM = TM_1 \oplus TM_2$. More explicitly, on $M_1 \times M_2$,
    \begin{equation*}
        I(v_1, v_2) = (J_1 v_1, J_2 v_2), \quad\text{and}\quad J(v_1, v_2) = (J_1 v_1, -J_2 v_2).
    \end{equation*}
    A direct calculation shows that $I^2 = J^2 = -Id$ and $IJ = JI$. Thus every product of two almost complex manifolds carries a natural almost bicomplex structure. This structure is not degenerate if and only if the dimensions of $M_1$ and $M_2$ agree.
\end{example}
If $\sigma(M)$ is the Hirzebruch signature of $M$, $\chi(M)$ is the Euler characteristic class, and $w_2(M)$ is the second Stiefel--Whitney class (see~\cite{Milnor1968}), then
\begin{thm}\label{thm:4d_obstruction}
Let $(M,I,J)$ be a closed almost bicomplex $4$-manifold. Then
\begin{equation*}
3\sigma(M)-2\chi(M)=d^{2}[M]
\end{equation*}
for some class $d\in H^{2}(M;\Z)$ satisfying $d\equiv w_{2}(M)\pmod{2}$.
\end{thm}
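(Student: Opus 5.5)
The plan is to reduce the statement to the classical Wu/Hirzebruch relation for almost complex $4$-manifolds. The key is that an almost bicomplex structure yields \emph{two} almost complex structures, $I$ and $J$, which induce opposite orientations on $M$.

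\textbf{Step 1 (splitting of $TM$).} Put $K=IJ$. Since $I$ and $J$ commute and square to $-\mathrm{Id}$, we have $K^2=\mathrm{Id}$. Hence $TM=E^+\oplus E^-$, where $E^{\pm}=\ker(K\mp\mathrm{Id})$ are the eigenbundles, i.e.\ the images of the idempotents $\e,\ed$ acting on $TM$. Both $I$ and $J$ commute with $K$, so they preserve $E^\pm$. On $E^+$ we have $J=-I$, and on $E^-$ we have $J=I$; which sign goes with which summand is only a naming convention. By the remark following Definition~\ref{def:almost_bicomplex}, $TM$ is a bicomplex bundle of rank $1$. So Proposition~\ref{prop:idempotent_bundles} gives $TM\cong L^1\e\oplus L^2\ed$, and each $E^\pm$ is a complex line bundle with respect to $I$ (real rank $2$). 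Write $L^1=(E^-,I)$ and $L^2=(E^+,I)$, and set $u=c_1(L^1)$, $v=c_1(L^2)$.

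\textbf{Step 2 (the structure $I$).} $I$ is an almost complex structure on $M$ with $(TM,I)\cong L^1\oplus L^2$. Orient $M$ by $I$. Then $c_1(TM,I)=u+v=b_1(TM)$, by Theorem~\ref{prop:bicomplex-class-descomposotion}, and $c_2(TM,I)=uv$. The classical identities $c_2=e(TM)$, so $c_2[M]=\chi(M)$, and $p_1=c_1^2-2c_2$ together with the signature theorem $\sigma=p_1[M]/3$ give
\[
(u+v)^2[M]=3\sigma(M)+2\chi(M).
\]

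\textbf{Step 3 (the structure $J$).} $J$ is also an almost complex structure, with $(TM,J)\cong L^1\oplus\overline{L^2}$. Complex conjugation reverses the orientation of a real $2$-plane bundle, so $J$ induces the orientation $\overline M$ opposite to that of $I$. Applying Step~2 to $(\overline M,J)$, with $d:=c_1(TM,J)=u-v$, gives
\[
d^2[\overline M]=3\sigma(\overline M)+2\chi(\overline M)=-3\sigma(M)+2\chi(M).
\]
Since $d^2[\overline M]=-d^2[M]$, we obtain $d^2[M]=3\sigma(M)-2\chi(M)$. Finally, the first Chern class of any complex structure reduces mod $2$ to the second Stiefel--Whitney class of the underlying real bundle, so $d\equiv w_2(M)\pmod 2$.

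\textbf{Main obstacle.} The delicate points are the orientation bookkeeping in Step~3 and making sure $\chi$ and $\sigma$ carry the right signs. I would verify these carefully in a local frame: take $e_1,Ie_1$ spanning $E^-$ and $e_2,Ie_2$ spanning $E^+$, and check that the $J$-orientation frame $(e_1,Je_1,e_2,Je_2)=(e_1,Ie_1,e_2,-Ie_2)$ is the negative of the $I$-orientation frame. The sign conventions for $L^1,L^2$ do not affect $d^2$, since swapping them replaces $d$ by $-d$. The hypothesis $I\neq\pm J$ is not needed for the identity itself.
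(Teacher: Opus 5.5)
Your argument is correct, but it takes a different route from the paper. The paper works only with $I$ and the $I$-orientation. From the splitting $(TM,I)=L_1\oplus L_2$ it gets $c_2(TM,I)=a_1a_2$. It then combines the two classical identities $c_1^2[M]=2\chi+3\sigma$ and $c_2[M]=\chi$ into $(a_1-a_2)^2[M]=c_1^2[M]-4c_2[M]=3\sigma-2\chi$.

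You instead use the splitting to see that $J$ is a second almost complex structure, equal to $I$ on one eigenbundle and to $-I$ on the other, so it induces the opposite orientation. You then need the Wu--Hirzebruch identity only once, for $(\overline{M},J)$; your Step 2 serves only as a template. Both routes produce the same class $d=\pm(a_1-a_2)$ and rest on the same classical inputs.

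Your route is more conceptual. It identifies $3\sigma-2\chi$ as the Wu number of $\overline{M}$ and shows that a closed almost bicomplex $4$-manifold is almost complex with \emph{both} orientations. The $\CP^2$ corollary then becomes the classical fact that $\overline{\CP^2}$ is not almost complex, and the $I\leftrightarrow J$ symmetry of the statement becomes visible. The paper's route avoids orientation bookkeeping, at the price of also using $c_2$.

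Your closing sentence, however, is wrong: the hypothesis $I\neq\pm J$ is needed, and it is precisely what makes both $E^{\pm}$ of real rank $2$. Each $E^\pm_p$ is $I$-invariant, hence even-dimensional, and $\operatorname{tr}K=\dim E^+-\dim E^-$ is locally constant. So on a connected $M$, either both $E^\pm$ are line bundles, or one of them is all of $TM$, i.e.\ $J=\mp I$. In the latter case your Step~3 collapses, since $J$ induces the same orientation as $I$, and the conclusion is false. For example, $\CP^2$ with $J=I$ the standard structure satisfies $I^2=J^2=-\mathrm{Id}$ and $IJ=JI$, but $3\sigma-2\chi=-3$ is not of the form $d^2[\CP^2]$.

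The remark you cite for ``$TM$ is a bicomplex bundle of rank $1$'' does not by itself justify this, because a $\BC$-module of real dimension $4$ need not be free (e.g.\ $\C^2\e$, which is the case $J=-I$). You should derive the rank-$(2,2)$ splitting from $I\neq\pm J$ as above, applied on each component of $M$. On $\CP^2\sqcup\CP^2$ with $J=I$ on one copy and $J=-I$ on the other, $I\neq\pm J$ holds globally, yet the identity fails: $3\sigma-2\chi=-6<0\le d^2[M]$.
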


\begin{proof}
Since the structure is non-degenerate, $K=IJ$ satisfies $K^{2}=\mathrm{Id}$ and $K\neq\pm\mathrm{Id}$. Hence $K$ has eigenvalues $+1$ and $-1$, and the eigenspaces decompose $TM$ into a sum of complex line bundles $L_{1}\oplus L_{2}$ with respect to the almost complex structure $I$. Write $a_{i}=c_{1}(L_{i})\in H^{2}(M;\Z)$. Then
\begin{equation*}
c_{1}(TM,I)=a_{1}+a_{2}\quad \text{and}\quad c_{2}(TM,I)=a_{1}a_{2}.
\end{equation*}
For any almost complex structure on a closed $4$-manifold, the Hirzebruch signature theorem and the Gauss--Bonnet theorem give the characteristic number identities
\begin{equation*}
c_{1}^{2}[M]=2\chi(M)+3\sigma(M)\quad\text{and}\quad c_{2}[M]=\chi(M),
\end{equation*}
which depend only on the oriented smooth topology of $M$ and not on the chosen almost complex structure. Therefore
\begin{equation*}
(a_{1}-a_{2})^{2}[M]=c_{1}^{2}[M]-4c_{2}[M]=(2\chi+3\sigma)-4\chi=3\sigma-2\chi.
\end{equation*}
Setting $d=a_{1}-a_{2}$, we obtain $d^{2}[M]=3\sigma-2\chi$. Moreover, reducing modulo $2$ and using that $c_{1}(TM,I)\equiv w_{2}(M)\pmod{2}$ for any almost complex structure, we get
\begin{equation*}
d=a_{1}-a_{2}\equiv a_{1}+a_{2}=c_{1}(TM,I)\equiv w_{2}(M)\pmod{2}.\qedhere
\end{equation*}
\end{proof}

\begin{cor}
The complex projective plane $\C\P^2$ admits no almost bicomplex structure.
\end{cor}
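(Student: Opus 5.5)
We argue by contradiction using Theorem~\ref{thm:4d_obstruction}. Suppose $\CP^2$ carries an almost bicomplex structure $(I,J)$. Since $\CP^2$ is a closed $4$-manifold, the theorem yields a class $d\in H^2(\CP^2;\Z)$ with $d^2[\CP^2]=3\sigma(\CP^2)-2\chi(\CP^2)$ and $d\equiv w_2(\CP^2)\pmod 2$. We compute both sides explicitly and show that no such $d$ exists.

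\textbf{Topological invariants.} First record $\chi(\CP^2)=3$, because the Betti numbers are $1,0,1,0,1$. Next, $\sigma(\CP^2)=1$ for the orientation coming from the complex structure: the intersection form on $H^2(\CP^2;\Z)\cong\Z a$ is $\langle 1\rangle$, since $a^2[\CP^2]=1$. Hence $3\sigma-2\chi=3-6=-3$.

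\textbf{Orientation.} One orientation subtlety must be handled first. The almost bicomplex structure $I$ is itself an almost complex structure and therefore determines an orientation, possibly the opposite one. In the reversed orientation $\sigma=-1$ and $3\sigma-2\chi=-9$. So we must treat both cases, $d^2[M]\in\{-3,-9\}$, or simply note that the argument below works uniformly.

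\textbf{The contradiction.} Write $d=ka$ with $k\in\Z$. Then $d^2[M]=\pm k^2$, where the sign is that of the intersection form $\pm\langle1\rangle$.
\begin{itemize}
\item \emph{Standard orientation.} Here $d^2[M]=k^2\ge 0$. This cannot equal $-3$, which is already a contradiction. (Integrality also fails.)
\item \emph{Reversed orientation.} Here $d^2[M]=-k^2=-9$, which forces $k=\pm3$. The parity condition gives $d\equiv w_2\equiv a\pmod 2$, and $k=\pm3$ is odd, so this condition does not exclude the case. This is the main obstacle.
\end{itemize}

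\textbf{Disposing of the reversed orientation.} We appeal to the classical fact that $\overline{\CP^2}$ admits no almost complex structure. This follows from the same Hirzebruch--Wu relation used in the proof of Theorem~\ref{thm:4d_obstruction}: we would need $c_1^2=2\chi+3\sigma=6-3=3=-m^2$ for some $c_1=ma$, which is impossible. Since $I$ is an almost complex structure inducing the orientation, it must induce the standard orientation, and that case was already excluded above. Therefore no almost bicomplex structure exists on $\CP^2$.
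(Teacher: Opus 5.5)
Your proof is correct and follows the paper's argument: apply Theorem~\ref{thm:4d_obstruction}, write $d=ka$, and reach the impossible equation $k^2=-3$. The only difference is your case analysis on the orientation induced by $I$, which the paper skips by simply taking $\sigma(\CP^2)=1$. Your appeal to the fact that $\overline{\CP^2}$ admits no almost complex structure (via $c_1^2=2\chi+3\sigma$) correctly closes the reversed-orientation case, where the theorem alone would still allow $k=\pm 3$.
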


\begin{proof}
For $\C\P^2$ the characteristic numbers are $\chi(\C\P^{2})=3$ and $\sigma(\C\P^2)=1$. If $\C\P^2$ carried an almost bicomplex structure, Theorem~\ref{thm:4d_obstruction} would provide a class $d\in H^2(\C\P^2;\Z)\cong\Z a$ with
\begin{equation*}
d^{2}[\C\P^2]=3\sigma-2\chi=-3.
\end{equation*}
Writing $d=ka$ for some $k\in\mathbb{Z}$ and using $a^{2}[\C\P^2]=1$, we obtain $k^2=-3$, which is impossible in $\Z$.
\end{proof}

\begin{remark}\label{rem:chern_detection}
     Chern classes alone are not sufficient in general to detect bicomplex structure. For instance, consider a complex vector bundle that is the sum of four complex line bundles $K$, $L$, $M$, and $N$. We can form two different bicomplex bundles of rank $2$:
    \begin{equation*}
        E = (K \oplus L)\e \oplus (M \oplus N)\ed,
        \qquad
        F = (K \oplus M)\e \oplus (L \oplus N)\ed.
    \end{equation*}
    As complex vector bundles, $E$ and $F$ are isomorphic (both are $K \oplus L \oplus M \oplus N$). However, as bicomplex bundles, they are generally not isomorphic, and their second bicomplex Chern classes differ in general. Thus bicomplex Chern classes carry strictly more information than ordinary complex Chern classes. Nevertheless, just as in the complex case, they are not complete invariants: two non-isomorphic bicomplex structures on the same underlying complex bundle may still have the same bicomplex Chern classes. Further invariants, such as higher characteristic classes or connections, are needed for a full classification.
\end{remark}

\section*{Acknowledgments}

The first-named author gratefully acknowledges the support of the Secretariat of Science, Humanities, Technology and Innovation (SECIHTI), Mexico, which awarded him a sabbatical fellowship.

The second-named author gratefully acknowledges the support of the
Secretaría de Ciencia, Humanidades, Tecnología e Innovación
(SECIHTI), Mexico. This work was partially supported by
DGAPA-PAPIIT project IN101424: Geometría compleja,
singularidades y dinámica.

The third-named author was supported by São Paulo Research Foundation - FAPESP, Brazil, under the grant 25/09846-7.

The fourth-named author was supported by DGAPA PAPIIT IA102626 Singularidades complejas y reales: Invariantes analíticos y topológicos. The second and fourth authors were supported by Simons-UNAM Geometry Program at Cuernavaca Simons Foundation International SFI-MPS-T-Institutes-00011977 JS. 

\bibliographystyle{amsplain}
\bibliography{References}
\end{document}